\theoremstyle{plain}
\newtheorem{theorem}{Theorem}[section]     
\newtheorem{proposition}[theorem]{Proposition}
\newtheorem{corollary}[theorem]{Corollary}
\newtheorem{lemma}[theorem]{Lemma}
\newtheorem*{affirmation}{Claim}
\newenvironment{proofitemenv}
  {\par\noindent\vspace{10pt}\rule{1ex}{1ex}\hspace{2ex}} 
  {\par}
\newtheorem{case}{Case}
\newtheorem{definition}[theorem]{Definition}
\theoremstyle{definition}
\newlength{\affrule}
\providecommand{\refeq}[1]{{(\ref{#1})}}
\providecommand{\naturals}[0]{\mathbb{N}}
\providecommand{\integers}[0]{\mathbb{Z}}
\providecommand{\reals}[0]{\mathbb{R}}
\providecommand{\complexes}[0]{\mathbb{C}}
\providecommand{\family}[1]{{\mathcal{#1}}}
\providecommand{\liealgebra}[1]{\mathfrak{#1}}
\providecommand{\suchthat}[0]{{\thickspace | \thickspace}}
\providecommand{\setsuchthat}[2]{{\left \{ {#1} \suchthat {#2} \right \}}}
\providecommand{\set}[1]{{\left \{ {#1} \right \}}}
\providecommand{\abs}[1]{{\lvert#1\rvert}}
\newcommand{\integral}[3][]{{\int_{#1} {#2}\, \mathrm{d}{#3}}}
\newcommand{\conditionalexpectation}[2]{{E \left({#1} | {#2}\right)}}
\DeclareMathOperator{\closureop}{cl}
\DeclareMathOperator{\supportop}{supp}
\newcommand{\function}[3]{{#1: #2 \to #3}}
\newcommand{\functionarray}[5]{{\begin{array}[t]{lrcl}
                                  #1:& #2 &\to     &#3 \\
                                     & #4 &\mapsto &#5
                                \end{array}}}
\newcommand{\closure}[1]{{\closureop \left({#1}\right)}}
\newcommand{\complementset}[1]{{#1}^{c}}
\newcommand{\support}[1]{{\supportop \left( {#1} \right)}}
\DeclareMathOperator{\Gl}{Gl}
\DeclareMathOperator{\Ad}{Ad}
\newcommand{\recurrent}{{\mathcal{R}}}
\DeclareMathOperator{\fix}{fix}
\newcommand{\Matrix}[2]{\left( \begin{array}{#1}#2\end{array} \right)}
\newcommand{\covercardinality}[2][]{{N_{#1}\left({#2}\right)}}
\newcommand{\mensurablesets}[1]{{\family{B}}}
\newcommand{\probability}[1]{{\mu}}
\newcommand{\measure}[1]{{\mu}}
\newcommand{\measureentropy}[2]{{h_{#1}\left({#2}\right)}}
\newcommand{\tcoverentropy}[2]{{h\left({#2}\,|\,{#1}\right)}}
\newcommand{\topologicalentropy}[1]{{h\left({#1}\right)}}
\newcommand{\ksentropy}[1]{{\sup_{\mu} \measureentropy{\mu}{T}}}
\newcommand{\bowenentropy}[2]{{h_{#1}\left({#2}\right)}}
\newcommand{\pinsker}[1][]{{\family{P}_{#1}}}
\begin{document}

\title{Entropy of Endomorphisms of Lie Groups}

\author{
Andr\'{e} Caldas\footnote{Departamento de Matem\'{a}tica - Universidade
de Bras\'{\i}lia-DF, Brasil. Supported by CNPq grant no.\ 140888/11-0}\, and Mauro Patr\~{a}o\footnote{Departamento de Matem\'{a}tica -
Universidade de Bras\'{\i}lia-DF, Brasil. Supported by CNPq grant no.\ 310790/09-3}
}

  \maketitle

    \begin{abstract}
    We show, when $G$ is a nilpotent or reductive Lie
    group, that the entropy of any surjective endomorphism
    coincides with the entropy of its restriction to the toral component of the center of $G$. In particular, if $G$ is a semi-simple Lie group, the entropy of any surjective endomorphism always vanishes.
    Since every compact group is reductive, the formula for the entropy of a endomorphism of a compact group reduces to the formula for the entropy of an endomorphism of a torus.
    We also characterize the recurrent set of conjugations of linear semi-simple Lie groups.
    \end{abstract}



\section{Introduction}

  In \cite{handel} and \cite{patrao:entropia}, it is introduced a topological notion of entropy for proper maps on locally compact separable metrizable spaces.
  It is shown there that this topological entropy coincides with the
  supremum of the Kolmogorov-Sinai's entropies and also with the minimum of the  Bowen's entropies.
  Using this variational principle, it is also shown in \cite{patrao:entropia} that the
  topological entropy of a linear isomorphism of a finite dimensional
  vector space always vanishes.
  This shows that the Bowen's formula (see \cite{bowen:entropia}) for
  the entropy of an endomorphism of a non-compact Lie Group gives just
  an upper bound for its topological entropy.
  At the end of \cite{patrao:entropia}, using again the variational
  principle, it is proved that the topological entropy of the endomorphism $\phi(z) = z^2$, where $z \in \complexes^*$, is equal to the topological entropy of its restriction to $S^1 \subset \complexes^*$.
  Using the same kind of reasoning presented there, one can show that
  the same result holds for any endomorphism $\phi(z) = z^n$, where $n \in \naturals$.

  These examples led us to the following conjecture.
  Since every connected abelian Lie group $G$ is isomorphic to
  the product of a torus (the toral component of $G$) by a finite
  dimensional vector space and since the component of $\complexes^*$ is just $S^1$, the topological entropy of a proper endomorphism of a connected abelian Lie group $G$ should be just the topological entropy of its restriction to the toral component of $G$.

  More generally, for a connected Lie group $G$, we can consider the toral component of the identity component of the center of $G$, denoted by $T(G)$ and called in the present paper just \emph{the toral component of $G$}.
  From now on we will use the term \emph{entropy} to refer to the
  topological entropy.
  In this paper, when $G$ is a nilpotent or reductive Lie
  group, we show that the entropy of any surjective endomorphism
  coincides with the entropy of its restriction to $T(G)$.
  Since every compact group is reductive, these results shed new light in
  the Bowen's formula even in the compact case.
  In fact, the formula for the entropy of a endomorphism of a compact
  group reduces to the formula for the entropy of an endomorphism of a
  torus (see \cite{sinai:entropia}).
  In particular, if $G$ is a compact semi-simple Lie group, the entropy of any
  surjective endomorphism always vanishes.
  One may wonder, for a general connected Lie group $G$, that the entropy of a surjective endomorphism coincides with the entropy of its restriction to $T(G)$.
  At the end of this article, we present arguments that suggest that
  the general case is slightly different.


  The paper is structured in sections corresponding to the classes
  of Lie groups for which we compute the topological entropy of
  their respective endomorphisms.
  But first, in a preliminary section, we collect some results used
  in the remaining sections.
  Since the concept of
  topological entropy given in \cite{patrao:entropia}
  requires properness, we always start the remaining sections
  considering surjective endomorphisms and
  showing that these endomorphisms are in fact proper maps.
  In Section
  \ref{sec:caso_abeliano},
  we treat the abelian case and shows the above conjecture about entropy.
  Section
  \ref{sec:caso_nilpotente}
  treats the nilpotent case.
  As in the abelian case, we conclude
  that the entropy of an endomorphism of $G$ is the entropy
  of its restriction to $T(G)$.
  In Section
  \ref{sec:caso_semi_simples},
  the semi-simple case is considered.
  Using the relation in a linear semi-simple Lie group between
  endomorphisms, conjugations and linear maps,
  the recurrent set of a given conjugation
  is characterized.
  Then, the concept of Li-Yorke pairs is used to demonstrate that
  the entropy of endomorphisms of semi-simple Lie groups
  (even not linear ones) always vanishes.
  In particular, since $T(G)$ is trivial in this case,
  the entropy of an endomorphism coincides with the entropy of
  its restriction to $T(G)$.
  In Section
  \ref{sec:caso_redutivel},
  we compute the entropy of endomorphisms of reductive groups,
  by using the previous results for the abelian and semi-simple cases.
  And finally, in Section
  \ref{sec:caso_geral},
  we end the paper with some remarks about the general case.


\section{Preliminaries}

  In this section, we collect some facts that are used in the next sections.
  In general, we just state the facts with references, but we do not present much details of the theory involved.
  From \cite{patrao:entropia}, Theorem 3.2, we have the following
  variational principle, where $\topologicalentropy{\phi}$
  is the \emph{topological entropy} defined in \cite{patrao:entropia}.
  That is, $\topologicalentropy{\phi}$ is the supremum of
  the entropies taken over all \emph{admissible covers}.

  \begin{proposition}
    \label{prop:principio_variacional:patrao}
    Let $X$ be a locally compact metrizable separable space and
    $\function{\phi}{X}{X}$ a proper map.
    Then
    \begin{equation*}
      \sup_{\mu}
      \measureentropy{\mu}{\phi}
      =
      \topologicalentropy{\phi}
      =
      \min_{d}
      \bowenentropy{d}{\phi}.
    \end{equation*}
  \end{proposition}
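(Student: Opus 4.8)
The plan is to reduce both equalities to the classical variational principle on a compact space by passing to the one-point compactification. Since $\phi$ is proper, it extends to a continuous self-map $\phi^+$ of the one-point compactification $X^+ = X \cup \{\infty\}$ fixing the point at infinity; here properness is exactly what makes $\phi^+$ continuous at $\infty$, while local compactness and separability of $X$ guarantee that $X^+$ is compact metrizable. The \emph{admissible covers} used to define $\topologicalentropy{\phi}$ are tailored so that each one corresponds to an open cover of $X^+$ whose unbounded element is a neighbourhood of $\infty$; so the first step is to prove that $\topologicalentropy{\phi} = \topologicalentropy{\phi^+}$, the ordinary topological entropy of the compact system.

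Next I would apply the classical variational principle on the compact metrizable space $X^+$, obtaining $\topologicalentropy{\phi^+} = \sup_\nu \measureentropy{\nu}{\phi^+}$, the supremum taken over $\phi^+$-invariant Borel probabilities $\nu$. The key reduction is to discard the mass at infinity. Since $\{\infty\}$ is invariant, each such $\nu$ decomposes as $\nu = t\,\delta_\infty + (1-t)\mu$ with $\mu$ a $\phi$-invariant probability on $X$ and $t = \nu(\{\infty\})$. Because $\infty$ is a fixed point we have $\measureentropy{\delta_\infty}{\phi^+} = 0$, so affinity of the Kolmogorov--Sinai entropy gives $\measureentropy{\nu}{\phi^+} = (1-t)\measureentropy{\mu}{\phi}$, using that $\phi^+$ agrees with $\phi$ on $X$. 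Hence the supremum over $\nu$ is realized by measures supported on $X$, which yields $\sup_\mu \measureentropy{\mu}{\phi} = \topologicalentropy{\phi^+} = \topologicalentropy{\phi}$ and establishes the left equality.

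For the right equality I would use the metric comparison in the spirit of Handel--Kitchens. Every proper compatible metric $d$ on $X$ arises as the restriction of a compatible metric on the compact space $X^+$, and on a compact space all compatible metrics produce the same Bowen entropy, namely $\topologicalentropy{\phi^+}$. The plan is to show that for an arbitrary compatible $d$ one has $\bowenentropy{d}{\phi} \ge \topologicalentropy{\phi}$, by comparing the $(n,\epsilon)$-separated sets measured by $d$ inside compacta with those measured by a metric on $X^+$, while for the distinguished $d$ pulled back from $X^+$ the two counts agree. This exhibits $\topologicalentropy{\phi}$ as the infimum over compatible metrics and shows that the infimum is attained, giving $\topologicalentropy{\phi} = \min_d \bowenentropy{d}{\phi}$.

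The main obstacle is the bookkeeping at infinity: making the correspondence between admissible covers of $X$ and open covers of $X^+$ fully precise, controlling how orbits approach $\infty$ so that no entropy is created or destroyed in passing between $X$ and $X^+$, and ensuring that the supremum over invariant measures is genuinely attained on $X$ rather than merely approached along measures that leak mass onto the fixed point $\infty$. Once this is handled uniformly, the three quantities all collapse onto $\topologicalentropy{\phi^+}$.
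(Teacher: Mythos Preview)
The paper does not actually prove this proposition: it is quoted verbatim as Theorem~3.2 of \cite{patrao:entropia} and used as a black box throughout, so there is no in-paper argument to compare against. Your sketch via the one-point compactification, the classical variational principle on $X^+$, and the affine decomposition $\nu = t\,\delta_\infty + (1-t)\mu$ is precisely the strategy of the cited source (together with the Handel--Kitchens metric comparison for the right-hand equality), so in spirit you are reproducing the intended proof rather than offering an alternative.

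One small inaccuracy worth flagging: you write that ``every proper compatible metric $d$ on $X$ arises as the restriction of a compatible metric on $X^+$''. That is not what is needed and is not true in general; the argument only requires that \emph{some} compatible metric on $X$ comes from a metric on $X^+$ (this gives the upper bound and shows the infimum is attained), while the lower bound $\bowenentropy{d}{\phi} \ge \topologicalentropy{\phi}$ for an arbitrary compatible $d$ is proved directly by relating $(n,\varepsilon)$-spanning sets on compacta to admissible covers. You state the correct plan in the next sentence, so this is a slip of phrasing rather than a gap in the strategy.
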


  As a consequence of Proposition
  \ref{prop:principio_variacional:patrao}
  we present the following formula for the entropy of products, which is well known in the compact case.

  \begin{proposition}
    \label{prop:entropia:formula_do_produto}
    Let $X$ and $Y$ be locally compact metrizable separable spaces and
    $\function{\phi}{X}{X}$,
    $\function{\psi}{Y}{Y}$ proper maps.
    Then,
    \begin{equation*}
      \topologicalentropy{\phi \times \psi}
      =
      \topologicalentropy{\phi}
      +
      \topologicalentropy{\psi}.
    \end{equation*}
  \end{proposition}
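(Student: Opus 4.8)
The plan is to prove the two inequalities
$\topologicalentropy{\phi\times\psi}\le\topologicalentropy{\phi}+\topologicalentropy{\psi}$ and
$\topologicalentropy{\phi\times\psi}\ge\topologicalentropy{\phi}+\topologicalentropy{\psi}$
separately, each one exploiting one half of the variational principle of Proposition \ref{prop:principio_variacional:patrao}. The point is that the upper bound is naturally a statement about spanning sets (hence about Bowen's entropy $\min_d\bowenentropy{d}{\cdot}$), while the lower bound is naturally a statement about invariant measures (hence about the Kolmogorov--Sinai side $\sup_\mu\measureentropy{\mu}{\cdot}$), and both are transported to $\topologicalentropy{\cdot}$ through that proposition.

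For the upper bound I would start from $\topologicalentropy{\cdot}=\min_d\bowenentropy{d}{\cdot}$. Choose metrics $d_X$ on $X$ and $d_Y$ on $Y$ realizing the minima, so that $\bowenentropy{d_X}{\phi}=\topologicalentropy{\phi}$ and $\bowenentropy{d_Y}{\psi}=\topologicalentropy{\psi}$, and equip $X\times Y$ with the max metric $d=\max(d_X,d_Y)$, which induces the product topology. The dynamical ball of $(x,y)$ for $\phi\times\psi$ relative to $d$ is exactly the product of the dynamical ball of $x$ for $\phi$ with that of $y$ for $\psi$; consequently, the product of an $(n,\varepsilon)$-spanning set of a compact $K_X\subseteq X$ and an $(n,\varepsilon)$-spanning set of a compact $K_Y\subseteq Y$ is an $(n,\varepsilon)$-spanning set of $K_X\times K_Y$. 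Passing to exponential growth rates yields $\bowenentropy{d}{\phi\times\psi}\le\bowenentropy{d_X}{\phi}+\bowenentropy{d_Y}{\psi}$, and since $\topologicalentropy{\phi\times\psi}=\min_\rho\bowenentropy{\rho}{\phi\times\psi}\le\bowenentropy{d}{\phi\times\psi}$, the upper bound follows.

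For the lower bound I would use $\topologicalentropy{\cdot}=\sup_\mu\measureentropy{\mu}{\cdot}$. Given any $\phi$-invariant Radon probability $\mu$ and any $\psi$-invariant Radon probability $\nu$, the product $\mu\times\nu$ is a $(\phi\times\psi)$-invariant probability, and the classical product formula for the measure-theoretic entropy (obtained by refining product partitions, whose conditional entropies add) gives $\measureentropy{\mu\times\nu}{\phi\times\psi}=\measureentropy{\mu}{\phi}+\measureentropy{\nu}{\psi}$. Hence $\topologicalentropy{\phi\times\psi}=\sup_\lambda\measureentropy{\lambda}{\phi\times\psi}\ge\measureentropy{\mu\times\nu}{\phi\times\psi}=\measureentropy{\mu}{\phi}+\measureentropy{\nu}{\psi}$, and taking the supremum over $\mu$ and over $\nu$ independently produces $\topologicalentropy{\phi\times\psi}\ge\topologicalentropy{\phi}+\topologicalentropy{\psi}$. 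Combining the two estimates gives the asserted equality.

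The delicate point, and the reason this is not a mere quotation of the compact-case result, is the non-compactness underlying the properness-based theory. One must check that the localized definitions behind $\bowenentropy{d}{\cdot}$ and $\measureentropy{\mu}{\cdot}$ behave well under products: every compact subset of $X\times Y$ is contained in a product of compact sets (so the spanning-set factorization applies), the max metric $d$ remains admissible for the theory of Proposition \ref{prop:principio_variacional:patrao}, and the product of two invariant Radon probabilities is again an invariant Radon probability to which the variational principle applies. Once these compatibilities are secured, the spanning-set factorization and the measure-theoretic product formula are standard, and the whole argument reduces to feeding them through the variational principle.
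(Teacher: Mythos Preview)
Your proposal is correct and follows essentially the same route as the paper: both use the variational principle on the measure side (with the product-measure formula $\measureentropy{\mu\times\nu}{\phi\times\psi}=\measureentropy{\mu}{\phi}+\measureentropy{\nu}{\psi}$) for the lower bound, and on the Bowen side (choosing metrics realizing the minima and passing to the max metric) for the upper bound. The only cosmetic difference is that the paper invokes an external reference for the inequality $\bowenentropy{d}{\phi\times\psi}\le\bowenentropy{d_X}{\phi}+\bowenentropy{d_Y}{\psi}$, whereas you sketch the spanning-set factorization yourself and spell out the non-compactness caveats.
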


  \begin{proof}
    From the variational principle, we have that
    \begin{align*}
      \topologicalentropy{\phi}
      +
      \topologicalentropy{\psi}
      &=
      \sup_{\mu}
      \measureentropy{\mu}{\phi}
      +
      \sup_{\nu}
      \measureentropy{\nu}{\psi}
      \\
      &=
      \sup_{\mu,\nu}
      \measureentropy{\mu \times \nu}{\phi \times \psi}
      \leq
      \topologicalentropy{\phi \times \psi}.
    \end{align*}
    On the other hand, by the variational principle there exist metrics $d_1$ and $d_2$ such that
    \begin{equation*}
      \topologicalentropy{\phi}
      = \bowenentropy{d_1}{\phi}
      \qquad \mbox{and} \qquad
      \topologicalentropy{\psi}
      =
      \bowenentropy{d_2}{\psi}.
    \end{equation*}
    But Proposition 2.2.15 of \cite{ferraiol:entropia} states that for
    the maximum distance $d$,
    \begin{equation*}
      \bowenentropy{d}{\phi \times \psi}
      \leq
      \bowenentropy{d_1}{\phi}
      +
      \bowenentropy{d_2}{\psi}.
    \end{equation*}
    Now, the variational principle leads us to
    \begin{equation*}
      \topologicalentropy{\phi \times \psi}
      \leq
      \bowenentropy{d_1}{\phi}
      +
      \bowenentropy{d_2}{\psi}
      =
      \topologicalentropy{\phi}
      +
      \topologicalentropy{\psi}.
    \end{equation*}
  \end{proof}

  The following proposition, which is used in section
  \ref{sec:caso_redutivel},
  also generalizes a simple result from the compact case
  (see Proposition 2.1 of \cite{patrao:entropia}).

  \begin{proposition}
    \label{prop:entropia:conjugacao}

    Let $X$ and $Y$ be locally compact metrizable separable spaces, and
    consider the diagram
    \begin{equation*}
      \xymatrix
      {
        X \ar[d]_{\pi} \ar[r]^\phi  &X \ar[d]^{\pi}
        \\
        Y \ar[r]_{\psi}             &Y
      },
    \end{equation*}
    where $\phi$, $\psi$ and $\pi$ are proper surjective maps.
    Then,
    \begin{equation*}
      \topologicalentropy{\psi}
      \leq
      \topologicalentropy{\phi}.
    \end{equation*}
  \end{proposition}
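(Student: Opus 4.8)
The plan is to prove the inequality on the measure-theoretic side of the variational principle of Proposition \ref{prop:principio_variacional:patrao}, by showing that every $\psi$-invariant Radon probability on $Y$ is the push-forward of some $\phi$-invariant Radon probability on $X$. Writing $\pi_*$ for the push-forward of measures, the commutativity $\pi \circ \phi = \psi \circ \pi$ gives $\pi_* \circ \phi_* = \psi_* \circ \pi_*$; hence if $\mu$ is $\phi$-invariant then $\nu := \pi_*\mu$ is $\psi$-invariant, and $\pi$ realizes $(Y, \psi, \nu)$ as a measure-theoretic factor of $(X, \phi, \mu)$. Pulling back any finite measurable partition $Q$ of $Y$ to $\pi^{-1}Q$ on $X$ shows $\measureentropy{\nu}{\psi} \leq \measureentropy{\mu}{\phi}$, i.e.\ Kolmogorov--Sinai entropy does not increase under factors. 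If, moreover, \emph{every} $\psi$-invariant $\nu$ arises in this way, then taking suprema and invoking the variational principle on both sides yields
\begin{equation*}
  \topologicalentropy{\psi}
  = \sup_{\nu} \measureentropy{\nu}{\psi}
  \leq \sup_{\mu} \measureentropy{\mu}{\phi}
  = \topologicalentropy{\phi},
\end{equation*}
which is the desired conclusion.

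So the heart of the matter is the \emph{lifting of invariant measures}: given a $\psi$-invariant Radon probability $\nu$ on $Y$, I want to produce a $\phi$-invariant Radon probability $\mu$ on $X$ with $\pi_*\mu = \nu$. First, since $\pi$ is a proper surjection between locally compact metrizable separable spaces, a measurable selection provides a Borel section $\function{s}{Y}{X}$ with $\pi \circ s = \identity$; setting $\mu_0 = s_*\nu$ gives a Radon probability with $\pi_*\mu_0 = \nu$. I then form the Krylov--Bogolyubov averages $\mu_n = \frac{1}{n}\sum_{k=0}^{n-1} \phi_*^k \mu_0$. Because $\nu$ is $\psi$-invariant, the identity $\pi_* \phi_*^k = \psi_*^k \pi_*$ gives $\pi_*\mu_n = \frac{1}{n}\sum_{k=0}^{n-1}\psi_*^k\nu = \nu$ for every $n$, so the whole sequence sits over $\nu$.

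The key point, and the step I expect to be the main obstacle in this non-compact setting, is to guarantee that a limit of the $\mu_n$ remains a \emph{probability} measure rather than losing mass to infinity; this is precisely where the properness of $\pi$ enters. As $\nu$ is a Radon probability it is tight, so for each $\epsilon > 0$ there is a compact $K \subseteq Y$ with $\nu(K) > 1 - \epsilon$; since $\pi$ is proper, $\pi^{-1}(K)$ is compact, and $\mu_n(\pi^{-1}(K)) \geq \nu(K) > 1 - \epsilon$ for all $n$. Thus $\set{\mu_n}$ is tight and, by Prokhorov's theorem, has a subsequence converging in the narrow topology to a Radon probability $\mu$. A standard argument shows $\mu$ is $\phi$-invariant, and the continuity and properness of $\pi$ pass $\pi_*\mu_n = \nu$ to the limit, giving $\pi_*\mu = \nu$. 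This supplies the required lift, and combined with the factor monotonicity above it completes the proof. The two facts needing careful justification are exactly this tightness/limit step and the monotonicity of Kolmogorov--Sinai entropy under factors in the Radon-probability framework, both ultimately resting on the properness of $\pi$.
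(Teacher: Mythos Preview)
The paper does not actually supply a proof of this proposition: it is stated in the preliminaries with a pointer to the compact case (Proposition~2.1 of \cite{patrao:entropia}) and then used later without further justification. So there is no ``paper's own proof'' to compare against.

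Your argument is correct. The lifting of invariant measures via a Borel section, Krylov--Bogolyubov averaging, and the tightness coming from properness of $\pi$ all go through as you describe; in a locally compact separable metrizable (hence Polish) space every Borel probability is automatically Radon, so there is no extra regularity issue, and the factor inequality $\measureentropy{\pi_*\mu}{\psi}\le\measureentropy{\mu}{\phi}$ is standard.

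That said, the route you chose is more elaborate than what the compact-case reference suggests. The direct argument works at the level of admissible open covers: if $\family{A}$ is an admissible cover of $Y$, then $\pi^{-1}\family{A}$ is an admissible cover of $X$ precisely because $\pi$ is proper (the element with compact complement pulls back to an element with compact complement). Surjectivity of $\pi$ gives $\covercardinality{\pi^{-1}\family{B}}=\covercardinality{\family{B}}$ for any cover $\family{B}$ of $Y$, and commutativity of the square gives $\pi^{-1}\bigl(\bigvee_{k=0}^{n-1}\psi^{-k}\family{A}\bigr)=\bigvee_{k=0}^{n-1}\phi^{-k}\pi^{-1}\family{A}$, so $\tcoverentropy{\family{A}}{\psi}=\tcoverentropy{\pi^{-1}\family{A}}{\phi}\le\topologicalentropy{\phi}$; taking the supremum over admissible $\family{A}$ finishes. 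This avoids the variational principle, measurable selections, and Prokhorov entirely. Your measure-theoretic route, on the other hand, yields the extra information that every $\psi$-invariant probability lifts to a $\phi$-invariant one, which is of independent interest.
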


  The following result is proved in Corollary 16 of
  \cite{bowen:entropia}.

  \begin{proposition}[Bowen's Formula]
    If $\phi$ is an endomorphism of a Lie group $G$
    and $d$ is a right invariant distance, then
    \begin{equation*}
      \bowenentropy{d}{\phi}
      =
      \sum_{\abs{\lambda} > 1}
      \log \abs{\lambda},
    \end{equation*}
    where $\lambda$ are the eigenvalues of $\phi'$, the differential of
    $\phi$ at the identity, counted with multiplicity.
  \end{proposition}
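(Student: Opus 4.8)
The plan is to reduce the computation of the Bowen entropy to a purely linear volume estimate on the Lie algebra, exploiting two facts: that $d$ is right invariant and that $\phi$ is a homomorphism intertwining $\exp$ with its differential $\phi'$.

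First I would record the reduction to the identity. Because each $\phi^i$ is a homomorphism and $d$ is right invariant,
\[
d(\phi^i(x), \phi^i(y)) = d(\phi^i(x)\phi^i(y)^{-1}, e) = d(\phi^i(xy^{-1}), e),
\]
so the dynamical distance $d_n(x,y) = \max_{0 \le i < n} d(\phi^i(x), \phi^i(y))$ depends only on $g = xy^{-1}$. Setting $\rho_n(g) = \max_{0 \le i < n} d(\phi^i(g), e)$ and $V_n(\epsilon) = \set{g : \rho_n(g) < \epsilon} = \bigcap_{i=0}^{n-1} \phi^{-i}(B(e;\epsilon))$, every dynamical $\epsilon$-ball is a one-sided translate of $V_n(\epsilon)$. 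Hence, with respect to a right Haar measure $\mu$, the minimal number of dynamical balls needed to span a fixed compact set $K$ of positive measure is comparable to $\mu(K)/\mu(V_n(\epsilon))$, and likewise for separated sets; the factor $\mu(K)$ and the modular function are subexponential and drop out, so that
\[
\bowenentropy{d}{\phi} = \lim_{\epsilon \to 0} \limsup_{n \to \infty} \frac{1}{n} \log \frac{1}{\mu(V_n(\epsilon))}.
\]

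Next I would estimate $\mu(V_n(\epsilon))$ through the exponential map. Since $\phi \circ \exp = \exp \circ \phi'$, one has $\phi^i(\exp X) = \exp((\phi')^i X)$; and because every point $\phi^i(g)$ with $g \in V_n(\epsilon)$ lies within $\epsilon$ of $e$, the entire estimate takes place in a single small neighborhood of the identity, where $\exp$ is a diffeomorphism, $\mu$ pulls back to Lebesgue measure up to a Jacobian near $1$, and the right invariant distance to $e$ is comparable to a fixed norm $\norm{\cdot}$ on $\liealgebra{g}$. Thus $\exp^{-1} V_n(\epsilon)$ is comparable to the linear dynamical ball $\setsuchthat{X \in \liealgebra{g}}{\norm{(\phi')^i X} < \epsilon, \ 0 \le i < n}$, and the problem becomes the computation of the Bowen entropy of the linear map $A = \phi'$.

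Finally I would carry out the linear volume computation. Decomposing $\liealgebra{g}$ into the generalized eigenspaces of $A$, the constraints $\norm{A^i X} < \epsilon$ bind at $i = n-1$ along the expanding directions ($\abs{\lambda} > 1$), forcing a width of order $\abs{\lambda}^{-n}$, while along the remaining directions the width stays of order $\epsilon$; polynomial (Jordan-block) corrections affect only subexponential factors. Consequently $\mu(V_n(\epsilon)) \asymp c(\epsilon)\prod_{\abs{\lambda} > 1} \abs{\lambda}^{-n}$, and taking logarithms yields $\bowenentropy{d}{\phi} = \sum_{\abs{\lambda}>1} \log\abs{\lambda}$. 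The main obstacle is the uniformity of these estimates: one must check that the comparison between $d$ and $\norm{\cdot}$, as well as the volume bounds, hold with constants independent of $n$, which is exactly what the confinement of all iterates $\phi^i(g)$ to the fixed neighborhood $B(e;\epsilon)$ secures; the Jordan-block bookkeeping is then routine and does not alter the exponential rate.
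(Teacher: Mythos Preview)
The paper does not prove this proposition; it is stated with the attribution ``proved in Corollary~16 of \cite{bowen:entropia}'' and no argument is supplied. There is therefore nothing in the paper to compare your attempt against.

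Your sketch is, in outline, Bowen's own argument and is sound. The one step that deserves more care than you give it is the claim that $\exp^{-1}V_n(\epsilon)$ is \emph{two-sidedly} comparable to the linear dynamical ball $\{X:\lVert(\phi')^iX\rVert<\epsilon,\ 0\le i<n\}$. The inclusion of the linear ball into $\exp^{-1}V_n(\epsilon)$ is immediate, but the reverse is not: knowing $\exp((\phi')^iX)\in B(e;\epsilon)$ does not by itself force $(\phi')^iX$ to be small, since $\exp$ is only locally injective. The remedy is a short induction on $i$: choose $\delta>0$ so that $\exp$ is a diffeomorphism on $B(0;\lVert\phi'\rVert\delta)$, and then $\epsilon$ small enough that $\exp^{-1}(B(e;\epsilon))\cap B(0;\lVert\phi'\rVert\delta)\subset B(0;\delta)$; if $(\phi')^iX\in B(0;\delta)$ then $(\phi')^{i+1}X\in B(0;\lVert\phi'\rVert\delta)$, and the constraint $\exp((\phi')^{i+1}X)\in B(e;\epsilon)$ forces $(\phi')^{i+1}X$ back into $B(0;\delta)$. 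This is precisely what ``confinement of all iterates to $B(e;\epsilon)$'' buys, and once it is made explicit your linearisation is legitimate and the volume computation on $\liealgebra{g}$ finishes the proof exactly as you indicate.
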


  The following result about compact principal bundles is proved in Theorem 19 of \cite{bowen:entropia}.

  \begin{proposition}
    \label{prop:entropia_soma_em_espacos_homogeneos}

    Let $\pi: X \to Y$ be a compact metrizable $G$-principal bundle. Assume that $\function{\phi}{X}{X}$, $\function{\psi}{Y}{Y}$, and
    $\function{\tau}{G}{G}$ are continuous maps so that
    $\psi \circ \pi = \pi \circ \phi$ and $\phi(xg) = \phi(x)\tau(g)$. Then
    \begin{equation*}
      \topologicalentropy{\phi}
      =
      \topologicalentropy{\psi}
      +
      \topologicalentropy{\tau}.
    \end{equation*}
  \end{proposition}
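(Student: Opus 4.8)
The plan is to place everything in the classical setting of continuous maps of compact metric spaces and then run Bowen's comparison of spanning and separated sets through the bundle. First I would note that compactness propagates: $Y=\pi(X)$ is compact, each fiber $\pi^{-1}(y)$ is closed in $X$ hence compact, and since the fibers are homeomorphic to $G$ the structure group $G$ is a compact (Lie) group. Thus $\phi,\psi,\tau$ all act on compact spaces; there the Bowen entropy $\bowenentropy{d}{\cdot}$ is independent of the admissible metric $d$ and, by Proposition \ref{prop:principio_variacional:patrao}, equals the topological entropy, so I may compute $\topologicalentropy{\phi}$, $\topologicalentropy{\psi}$, $\topologicalentropy{\tau}$ with $(n,\epsilon)$-spanning and $(n,\epsilon)$-separated sets in the dynamical metrics $d^\phi_n(x,x')=\max_{0\le i<n} d_X(\phi^i x,\phi^i x')$, and likewise $d^\psi_n$, $d^\tau_n$. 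I would also record two structural facts. Comparing $\phi((xg)h)=\phi(xg)\tau(h)=\phi(x)\tau(g)\tau(h)$ with $\phi(x(gh))=\phi(x)\tau(gh)$ shows that $\tau$ is an endomorphism of $G$, so $\topologicalentropy{\tau}$ is meaningful and $\phi^i(xg)=\phi^i(x)\tau^i(g)$ for all $i$. And, choosing $d_G$ left invariant (available since $G$ is a compact group), the freeness and continuity of the right action give, by compactness of $X\times G\times G$, a positive nondecreasing function $\rho$ with $d_X(za,zb)\ge\rho\!\left(d_G(a,b)\right)$ uniformly in $z\in X$, $a,b\in G$, and $\rho(\epsilon)>0$ for $\epsilon>0$.

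I would prove the lower bound $\topologicalentropy{\phi}\ge\topologicalentropy{\psi}+\topologicalentropy{\tau}$ first, as it is self-contained. Fix $\epsilon>0$, take an $(n,\epsilon)$-separated set $E\subset Y$ for $\psi$ and an $(n,\epsilon)$-separated set $F\subset G$ for $\tau$, choose one lift $x_y\in\pi^{-1}(y)$ for each $y\in E$, and consider $\{x_y f: y\in E,\ f\in F\}\subset X$. For two points over distinct base points $y\ne y'$: were they $d^\phi_n$-close, uniform continuity of $\pi$ together with $\pi\phi^i=\psi^i\pi$ would force the $\psi$-orbits of $y$ and $y'$ to be close, contradicting the separation of $E$; hence they are $d^\phi_n$-separated at a scale $\epsilon'=\epsilon'(\epsilon)$. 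For two points $x_y f$, $x_y f'$ in the same fiber, $\phi^i(x_y f)=\phi^i(x_y)\tau^i(f)$ and $\phi^i(x_y f')=\phi^i(x_y)\tau^i(f')$ share the common left factor $\phi^i(x_y)$, so by the uniform lower bound above and left invariance of $d_G$ one gets $d^\phi_n(x_y f,x_y f')\ge\rho\!\left(d^\tau_n(f,f')\right)\ge\rho(\epsilon)>0$. Therefore the whole family is $(n,\eta)$-separated with $\eta=\min(\epsilon',\rho(\epsilon))$ and has cardinality $\#E\cdot\#F$; taking $\frac1n\log$, then $n\to\infty$, then $\epsilon\to0$ (so that $\eta\to0$) yields $\topologicalentropy{\phi}\ge\topologicalentropy{\psi}+\topologicalentropy{\tau}$.

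For the upper bound $\topologicalentropy{\phi}\le\topologicalentropy{\psi}+\topologicalentropy{\tau}$ I would cover rather than separate. The key uniform input is dual to the one above: the same left-invariant $d_G$, combined with uniform continuity of the right action, makes the dynamical metric induced by $\phi$ on any single fiber uniformly comparable to $d^\tau_n$ on fiber coordinates, because along the orbit the fiber coordinates of $x_y g$ and $x_y g'$ differ only by the common left cocycle $\phi^i(x_y)$, which left invariance cancels; consequently every fiber $\pi^{-1}(y)$ is covered by a uniformly bounded number $r_n(\epsilon',\tau)$ of $(n,\epsilon')$-Bowen balls. I would then fix an $(n,\delta)$-spanning set $P$ of $Y$ for $\psi$ and, over a finite trivializing atlas $\{U_1,\dots,U_k\}$ of the compact base, transfer a fiber-spanning set of size $r_n(\epsilon',\tau)$ to all fibers whose base orbit stays $\delta$-shadowed by some $p\in P$, obtaining an $(n,\epsilon)$-spanning set of $X$ of cardinality at most $C\,\#P\cdot r_n(\epsilon',\tau)$ for a constant $C$ depending only on the atlas; passing to exponential growth rates then gives the inequality.

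The main obstacle lies exactly in this transfer step, and it is why the statement is more than the product formula of Proposition \ref{prop:entropia:formula_do_produto}: the bundle is only locally trivial, so the fiber coordinate evolves as a cocycle over the base dynamics, not autonomously, and a spanning set adapted to one fiber need not remain spanning for a neighboring fiber whose base orbit merely shadows it, since the two cocycles may drift apart as the orbit wanders across the charts of the atlas. Controlling this drift uniformly in $n$ and over all of $X$ is the heart of the argument; I would handle it by using compactness to make the finitely many transition maps and the right action uniformly continuous, so that the drift accumulated while the base orbit stays $\delta$-close contributes nothing to the exponential rate as $\delta\to0$. The factor inequality $\topologicalentropy{\phi}\ge\topologicalentropy{\psi}$ is in any case furnished by Proposition \ref{prop:entropia:conjugacao}, but the additive lower bound genuinely requires the product construction of the second paragraph.
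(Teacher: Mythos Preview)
The paper does not supply its own proof of this proposition: it is stated as a quotation of Theorem~19 in Bowen's paper \cite{bowen:entropia}, with no argument given. So there is nothing in the paper to compare against beyond the citation; your proposal is effectively a reconstruction of Bowen's original proof, and the overall architecture you describe---separated sets for the lower bound, spanning sets plus a fiberwise estimate for the upper bound---is indeed Bowen's.

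Your lower bound is correct and cleanly stated; the identity $\phi^i(x_yg)=\phi^i(x_y)\tau^i(g)$ together with a uniform lower bound $d_X(za,zb)\ge\rho(d_G(a,b))$ is exactly what makes the lifted product set $(n,\eta)$-separated. One small wording issue: you call $\phi^i(x_y)$ a ``left cocycle'', but it is a point of $X$, not a group element; what you are really using is that both orbit points share the same base point in $X$ and differ by right translation by $\tau^i(g)$ versus $\tau^i(g')$.

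The upper bound sketch has the right ingredients but is where the real work hides, as you yourself flag. Bowen organizes it differently and more robustly: he first proves a general fiber inequality (his Theorem~17), namely $h_d(\phi)\le h_d(\psi)+\sup_{y\in Y} h_d(\phi,\pi^{-1}(y))$, where the last term is the Bowen entropy relative to the compact set $\pi^{-1}(y)$; only then does he identify each fiber term with $h(\tau)$ via the equivariance $\phi(xg)=\phi(x)\tau(g)$. This two-step decomposition sidesteps the ``drift across charts'' problem you describe, because the relative-entropy inequality already absorbs the base shadowing, and the identification $h_d(\phi,\pi^{-1}(y))=h(\tau)$ is a computation inside a single fiber where no transition functions intervene. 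Your proposed direct attack---building a global $(n,\epsilon)$-spanning set by transporting fiber spanning sets along a base spanning set---can be made to work, but controlling the accumulated error as the base orbit crosses charts, uniformly in $n$, is genuinely delicate and your sketch does not yet pin it down. If you want a self-contained argument, following Bowen's Theorem~17 route is cleaner.
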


  A tool we used in order to show that certain systems have
  zero entropy is the concept of Li-Yorke pairs.
  In this paper, we introduce a topological definition of a Li-Yorke pair,
  which is more adjusted to the non-compact case
  then the usual definition wich uses $\limsup$ and $\liminf$
  of the distances between $\phi^n(x)$ and $\phi^n(y)$
  (see page 2 of \cite{on_li_yorke_pairs}).

  \begin{definition}[Li-Yorke pair]
    \label{def:li-yorke_pair}

    Given a continuous function $\function{\phi}{X}{X}$,
    $(a,b) \in X \times X$ is called a \emph{Li-Yorke pair} when there exist
    $c \in X$, $n_k \rightarrow \infty$ and
    $m_k \rightarrow \infty$ such that
    $(\phi^{n_k}(a), \phi^{n_k}(b)) \rightarrow (a,b)$ and
    $(\phi^{m_k}(a), \phi^{m_k}(b)) \rightarrow (c,c)$.
  \end{definition}

  We shall prove that positive entropy implies the existence
  of Li-Yorke pairs.
  A stronger version could be demonstrated along the lines of
  Theorem 2.3 in \cite{on_li_yorke_pairs}, but since we
  do not need that level of generality, we shall prove this simpler version.

  \begin{definition}[Pinsker $\sigma$-algebra and product measure]
    For a $\phi$-invariant measure $\mu$,
    the \emph{Pinsker $\sigma$-algebra} $\pinsker{}$,
    is greatest sub-$\sigma$-algebra such that
    $\measureentropy{\mu|_{\pinsker{}}}{\phi} = 0$.
    The \emph{product} of $\mu$ with itself over $\pinsker{}$,
    denoted by $\lambda_\mu$, is the measure on $X \times X$
    such that, for any measurable sets $A, B \subset X$,
    \begin{equation*}
      \lambda_\mu(A \times B)
      =
      \integral{\conditionalexpectation{1_A}{\pinsker{}}
                \conditionalexpectation{1_B}{\pinsker{}}}{\mu}.
    \end{equation*}
  \end{definition}

  Notice that if $\mu$ is $\phi$-ergodic, then $\lambda_\mu$ is
  $\phi \times \phi$-ergodic
  (see \cite{entropy_pairs}).
  Also, notice that if $\mu(A) = 0$, then $A \in \pinsker{}$.

  \begin{lemma}
    \label{le:diagonal_com_medida_nula}

    Denote by $\Delta$, the set of diagonal points in $X \times X$.
    That is, $\Delta = \setsuchthat{(x,x)}{x \in X}$.
    Then, for a $\phi$-ergodic measure $\mu$ such that
    $\measureentropy{\mu}{\phi} > 0$,
    \begin{equation*}
      \lambda_\mu(\Delta) = 0.
    \end{equation*}
  \end{lemma}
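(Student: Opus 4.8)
The plan is to exploit the $\phi \times \phi$-ergodicity of $\lambda_\mu$ together with the invariance of the diagonal, reducing the statement to a dichotomy $\lambda_\mu(\Delta) \in \set{0,1}$, and then to discard the value $1$ by showing that it would force the Pinsker $\sigma$-algebra to be the whole Borel $\sigma$-algebra, contradicting the hypothesis $\measureentropy{\mu}{\phi} > 0$.

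First I would observe that $\Delta$ is invariant modulo $\lambda_\mu$. Since $X$ is locally compact metrizable separable, $\Delta$ is closed and hence Borel. Plainly $(\phi \times \phi)(\Delta) \subseteq \Delta$, so $\Delta \subseteq (\phi \times \phi)^{-1}(\Delta)$. Because $\mu$ is $\phi$-ergodic, $\lambda_\mu$ is $\phi \times \phi$-ergodic, and in particular $\phi \times \phi$-invariant, so $\lambda_\mu((\phi \times \phi)^{-1}(\Delta)) = \lambda_\mu(\Delta)$; combined with the above inclusion this yields $\lambda_\mu((\phi \times \phi)^{-1}(\Delta) \setminus \Delta) = 0$, that is, $\Delta = (\phi \times \phi)^{-1}(\Delta)$ modulo $\lambda_\mu$. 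Ergodicity then gives $\lambda_\mu(\Delta) \in \set{0,1}$.

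It remains to rule out $\lambda_\mu(\Delta) = 1$. I would argue by contradiction. For any measurable $A \subseteq X$, the rectangle $A \times \complementset{A}$ is disjoint from $\Delta$, so $\lambda_\mu(A \times \complementset{A}) = 0$, and unfolding the definition of $\lambda_\mu$ gives
\[
  \integral{\conditionalexpectation{1_A}{\pinsker{}}
           \left(1 - \conditionalexpectation{1_A}{\pinsker{}}\right)}{\mu}
  = 0.
\]
Since the integrand is nonnegative and $\conditionalexpectation{1_A}{\pinsker{}} \in [0,1]$, it vanishes $\mu$-almost everywhere, forcing $\conditionalexpectation{1_A}{\pinsker{}} \in \set{0,1}$ a.e. A short computation with the defining property of conditional expectation then shows that $A$ coincides, modulo $\mu$, with the $\pinsker{}$-measurable set $\setsuchthat{x}{\conditionalexpectation{1_A}{\pinsker{}}(x) = 1}$. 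As $A$ was arbitrary, $\pinsker{}$ would be the full Borel $\sigma$-algebra modulo $\mu$, whence $\measureentropy{\mu}{\phi} = \measureentropy{\mu|_{\pinsker{}}}{\phi} = 0$, contradicting the hypothesis. Therefore $\lambda_\mu(\Delta) = 0$.

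The manipulations of conditional expectations are routine; the point demanding a little care is the invariance of $\Delta$ modulo $\lambda_\mu$, since $\phi$ need not be injective and $(\phi \times \phi)^{-1}(\Delta)$ may strictly contain $\Delta$ set-theoretically. This is precisely where the $\phi \times \phi$-invariance of $\lambda_\mu$ is used to recover equality up to a null set, and I expect this invariance step, rather than the final contradiction, to be the main obstacle.
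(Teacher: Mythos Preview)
Your proposal is correct and follows essentially the same route as the paper: reduce to the dichotomy $\lambda_\mu(\Delta)\in\{0,1\}$ via ergodicity and invariance of $\Delta$, assume $\lambda_\mu(\Delta)=1$, use $\lambda_\mu(A\times \complementset{A})=0$ to force $\conditionalexpectation{1_A}{\pinsker{}}\in\{0,1\}$ a.e., and conclude that every Borel set lies in $\pinsker{}$ modulo $\mu$, contradicting $\measureentropy{\mu}{\phi}>0$. The only difference is cosmetic: you are more explicit about why $\Delta$ is invariant only modulo $\lambda_\mu$ when $\phi$ is not injective, whereas the paper simply asserts invariance; and the paper phrases the $\{0,1\}$-valuedness as the product $\conditionalexpectation{1_B}{\pinsker{}}\,\conditionalexpectation{1_{\complementset{B}}}{\pinsker{}}=0$ and then computes $\mu(B\setminus A)=\mu(A\setminus B)=0$ for $A=\{\conditionalexpectation{1_B}{\pinsker{}}\neq 0\}$, which is the same computation you sketch.
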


  \begin{proof}
    Since $\lambda_\mu$ is ergodic
    and $\Delta$ is $\phi \times \phi$-invariant,
    it is enough to prove that
    $\lambda_\mu(\Delta) \neq 1$.
    Suppose on the contrary, that
    $\lambda_\mu(\Delta) = 1$.
    Then, for any measurable $B \subset X$,
    \begin{equation*}
      \integral{\conditionalexpectation{1_B}{\pinsker{}}
                \conditionalexpectation{1_{\complementset{B}}}{\pinsker{}}}{\mu}
      =
      \lambda_\mu
      \left(B \times \complementset{B}\right)
      =
      0.
    \end{equation*}
    And since the conditional expectation of a non-negative function is
    non-negative, we have that
    \begin{equation}
      \label{eq:excludent_conditionals}
      \conditionalexpectation{1_B}{\pinsker{}}
      \conditionalexpectation{1_{\complementset{B}}}{\pinsker{}}
      =
      0
    \end{equation}
    almost everywhere.
    Let
    \begin{equation*}
      A
      =
      \setsuchthat{x \in X}{\conditionalexpectation{1_B}{\pinsker{}} \neq 0}.
    \end{equation*}
    Then, $A \in \pinsker{}$.
    Now,
    \begin{align*}
      \mu(B \setminus A)
      &=
      \integral[\complementset{A}]{1_B}{\mu}
      \\
      &=
      \integral[\complementset{A}]{\conditionalexpectation{1_B}{\pinsker{}}}{\mu}
      \\
      &=
      0.
    \end{align*}
    Likewise,
    \begin{align*}
      \mu(A \setminus B)
      &=
      \integral[A]{1_{\complementset{B}}}{\mu}
      \\
      &=
      \integral[A]{\conditionalexpectation{1_{\complementset{B}}}{\pinsker{}}}{\mu}
      \\
      &=
      0,
    \end{align*}
    where the last equality is due to the fact that
    when restricted to $A$,
    $\conditionalexpectation{1_B}{\pinsker{}} \neq 0$,
    and therefore, according to equation
    \refeq{eq:excludent_conditionals},
    $\conditionalexpectation{1_{\complementset{B}}}{\pinsker{}} = 0$
    almost everywhere.

    Using those relations, since $\pinsker{}$ is a sub-$\sigma$-algebra
    containing $A$, and containing also all measurable sets with null
    $\mu$ measure, we conclude that $B \in \pinsker{}$.
    But since $B$ was arbitrary, this would imply that
    $\measureentropy{\mu}{\phi} = 0$.
    The contradiction implies that
    $\lambda_\mu(\Delta) \neq 1$.
  \end{proof}

  \begin{lemma}
    \label{le:diagonal_no_suporte}

    Let $Z = \support{\mu}$ and $W = \support{\lambda_\mu}$.
    Denote by $\Delta_Z$ the set of diagonal points $(z,z)$,
    with $z \in Z$.
    That is, $\Delta_Z = \setsuchthat{(z,z)}{z \in Z}$.
    Then,
    \begin{equation*}
      \Delta_Z
      \subset W.
    \end{equation*}
  \end{lemma}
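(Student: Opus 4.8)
The plan is to argue directly from the definition of the support of a measure as the set of points all of whose open neighborhoods carry positive measure. Fix $z \in Z = \support{\mu}$; the goal is to show $(z,z) \in W = \support{\lambda_\mu}$, i.e.\ that every open neighborhood of $(z,z)$ in $X \times X$ has positive $\lambda_\mu$-measure. Since the product open sets form a basis for the topology of $X \times X$, any neighborhood of $(z,z)$ contains a set of the form $V_1 \times V_2$ with $z \in V_1 \cap V_2$ and $V_1, V_2$ open; replacing both factors by $V = V_1 \cap V_2$, it suffices to prove that $\lambda_\mu(V \times V) > 0$ for every open neighborhood $V$ of $z$.

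First I would unwind the definition of $\lambda_\mu$ on the diagonal product $V \times V$, which gives
\begin{equation*}
  \lambda_\mu(V \times V)
  =
  \integral{\conditionalexpectation{1_V}{\pinsker{}}^2}{\mu}.
\end{equation*}
The key step is then to bound this integral from below: by the Cauchy--Schwarz inequality (equivalently, Jensen's inequality applied to the convex function $t \mapsto t^2$ and the probability measure $\mu$), together with the fact that the integral of a conditional expectation equals the integral of the original function, one obtains
\begin{equation*}
  \integral{\conditionalexpectation{1_V}{\pinsker{}}^2}{\mu}
  \geq
  \left(\integral{\conditionalexpectation{1_V}{\pinsker{}}}{\mu}\right)^2
  =
  \mu(V)^2.
\end{equation*}
Finally, since $z$ belongs to the support of $\mu$, every open neighborhood $V$ of $z$ satisfies $\mu(V) > 0$, whence $\lambda_\mu(V \times V) \geq \mu(V)^2 > 0$, which is exactly what was needed.

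The computation is short, so the only points requiring care are the reduction to product neighborhoods of $(z,z)$ and the correct application of the lower bound $\int g^2 \, \mathrm{d}\mu \geq \left(\int g\, \mathrm{d}\mu\right)^2$ to the nonnegative $\pinsker{}$-measurable function $g = \conditionalexpectation{1_V}{\pinsker{}}$. I expect no genuine obstacle beyond keeping track of these standard facts about conditional expectation and the characterization of support; in particular, no ergodicity or entropy hypothesis is used here, only the definition of $\lambda_\mu$ and that $\mu$ is a probability measure.
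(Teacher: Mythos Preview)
Your proof is correct and follows essentially the same route as the paper's: both reduce to a product neighborhood $V \times V$ of $(z,z)$, rewrite $\lambda_\mu(V \times V) = \int \conditionalexpectation{1_V}{\pinsker{}}^2\, \mathrm{d}\mu$, and conclude this is positive because $\mu(V) > 0$. The only cosmetic difference is that the paper argues by contrapositive using the equivalence $\int g^2\, \mathrm{d}\mu = 0 \Leftrightarrow \int g\, \mathrm{d}\mu = 0$ for nonnegative $g$, whereas you use the quantitative Jensen bound $\int g^2\, \mathrm{d}\mu \geq (\int g\, \mathrm{d}\mu)^2$.
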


  \begin{proof}
    If $(z,z) \in Z \times Z$ is not in the support of $\lambda_\mu$,
    then, there exists an open neighborhood $U$ of $(z,z)$,
    such that $\lambda_\mu(U) = 0$.
    In this case, there exists an open neighborhood $A$ of $z$,
    such that $z \in A \times A \subset U$.
    Then,
    \begin{align*}
      \lambda_\mu(U) = 0
      &\Rightarrow
      \lambda_\mu(A \times A) = 0
      \\
      &\Leftrightarrow
      \integral{\conditionalexpectation{1_A}{\pinsker{}}^2}{\mu} = 0
      \\
      &\Leftrightarrow
      \integral{\conditionalexpectation{1_A}{\pinsker{}}}{\mu} = 0
      \\
      &\Leftrightarrow
      \mu(A) = 0.
    \end{align*}
    But this contradicts the fact that $z$ is in the support of $\mu$.
  \end{proof}

  \begin{proposition}
    \label{prop:positive_entropy_has_li-yorke_pair}

    Let $X$ be a locally compact metrizable separable space and
    $\function{\phi}{X}{X}$ a proper map.
    If $\topologicalentropy{\phi} > 0$, then there exists a Li-Yorke
    pair for this system.
  \end{proposition}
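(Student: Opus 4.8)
The plan is to use the variational principle (Proposition~\ref{prop:principio_variacional:patrao}) to extract, from the hypothesis $\topologicalentropy{\phi} > 0$, a $\phi$-invariant measure $\mu$ with $\measureentropy{\mu}{\phi} > 0$, and then to manufacture a Li-Yorke pair from the support structure of $\mu$ and its Pinsker product $\lambda_\mu$. The two preliminary lemmas are set up precisely for this: Lemma~\ref{le:diagonal_com_medida_nula} tells us the diagonal is $\lambda_\mu$-null, while Lemma~\ref{le:diagonal_no_suporte} tells us that the diagonal over $Z = \support{\mu}$ nonetheless sits inside $W = \support{\lambda_\mu}$. The combination is exactly what is needed to find a pair that is both genuinely off-diagonal (from $\lambda_\mu(\Delta)=0$) yet has a diagonal point in the closure of its orbit structure (from $\Delta_Z \subset W$).

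First I would reduce to the ergodic case: by the variational principle $\sup_\mu \measureentropy{\mu}{\phi} = \topologicalentropy{\phi} > 0$, so there is some invariant $\mu$ of positive entropy, and by the ergodic decomposition we may take $\mu$ to be $\phi$-ergodic with $\measureentropy{\mu}{\phi} > 0$. As noted in the excerpt just before Lemma~\ref{le:diagonal_com_medida_nula}, $\lambda_\mu$ is then $\phi \times \phi$-ergodic. Next I would use ergodicity of $\lambda_\mu$ to run Poincar\'e recurrence on the system $(X \times X, \phi \times \phi, \lambda_\mu)$: since $\lambda_\mu(\Delta) = 0$ by Lemma~\ref{le:diagonal_com_medida_nula}, $\lambda_\mu$-almost every point $(a,b)$ lies off the diagonal and is recurrent, giving times $n_k \to \infty$ with $(\phi^{n_k}(a), \phi^{n_k}(b)) \to (a,b)$. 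This produces the first of the two required convergences in Definition~\ref{def:li-yorke_pair}, and I would choose $(a,b) \in W = \support{\lambda_\mu}$ off the diagonal.

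To produce the second convergence, toward a diagonal point $(c,c)$, I would invoke Lemma~\ref{le:diagonal_no_suporte}: pick any $z \in Z = \support{\mu}$, so that $(z,z) \in \Delta_Z \subset W = \support{\lambda_\mu}$. Since $(z,z)$ lies in the support of the ergodic measure $\lambda_\mu$, its forward orbit under $\phi \times \phi$ visits every neighborhood of $(z,z)$ along a sequence $m_k \to \infty$ for $\lambda_\mu$-a.e.\ starting point; choosing the starting point to be the same recurrent off-diagonal $(a,b)$ selected above (the intersection of two full-measure sets is full-measure, hence nonempty), we get $(\phi^{m_k}(a), \phi^{m_k}(b)) \to (z,z) = (c,c)$. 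Here one uses that an ergodic measure's generic orbit is dense in its support, so both the self-recurrence to $(a,b)$ and the approach to $(c,c)$ hold simultaneously for a common generic point. Setting $c = z$ then exhibits $(a,b)$ as a Li-Yorke pair in the sense of Definition~\ref{def:li-yorke_pair}.

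The main obstacle I anticipate is the careful bookkeeping in the non-compact, proper-map setting: ensuring that the generic-orbit and Poincar\'e-recurrence arguments go through for Radon (possibly infinite, but here invariant probability) measures on a merely locally compact space, and that density in the support really delivers the visiting-times statement. The clean way around this is to fix a single $\lambda_\mu$-generic point $(a,b) \in W \setminus \Delta$, whose orbit closure equals $W$ by ergodicity; then both $(a,b) \in W$ and $(z,z) \in \Delta_Z \subset W$ are in that orbit closure, yielding the two sequences $n_k, m_k \to \infty$ directly from the definition of orbit closure, with no further measure-theoretic subtleties. I expect the proof to hinge on stating this orbit-closure equals support fact precisely and on verifying that $W \setminus \Delta$ has full $\lambda_\mu$-measure, which is immediate from Lemma~\ref{le:diagonal_com_medida_nula}.
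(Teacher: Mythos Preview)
Your proposal is correct and follows essentially the same route as the paper: obtain an ergodic $\mu$ with positive entropy, use Lemma~\ref{le:diagonal_com_medida_nula} to get $\lambda_\mu(\Delta)=0$ and Lemma~\ref{le:diagonal_no_suporte} to place a diagonal point $(c,c)$ in $W=\support{\lambda_\mu}$, and then pick an off-diagonal $\lambda_\mu$-generic point whose orbit is dense in $W$. The only cosmetic difference is that the paper spells out the ``generic orbit is dense in the support'' step explicitly by intersecting the full-measure sets $W_U=\limsup_n(\phi\times\phi)^{-n}(U)$ over a countable basis $\beta$ of $W$, whereas you invoke that fact directly.
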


  \begin{proof}
    The fact that $\topologicalentropy{\phi} > 0$ implies,
    using Proposition
    \ref{prop:principio_variacional:patrao},
    that there exists a $\phi$-ergodic measure $\mu$,
    such that
    \begin{equation*}
      \measureentropy{\mu}{\phi} > 0.
    \end{equation*}

    Let $Z = \support{\mu}$ and $W = \support{\lambda_\mu}$.
    For each non-empty open set $U$ of $W$ in its relative topology,
    let
    \begin{equation*}
      W_U
      =
      \limsup_{n \rightarrow \infty}
      \phi^{-n}(U).
    \end{equation*}
    The ergodicity of $\lambda_\mu$ implies that
    $\lambda_\mu(W_U) = 1$.
    Therefore, if $\beta$ is an enumerable basis for the topology
    of $W$, then $\lambda_\mu\left( W_\beta \right) = 1$ for
    \begin{equation*}
      W_\beta
      =
      \bigcap_{U \in \beta}
      W_U.
    \end{equation*}
    Together with Lemma
    \ref{le:diagonal_com_medida_nula},
    this implies that
    \begin{equation*}
      \lambda_\mu\left(W_\beta \setminus \Delta\right)
      =
      1.
    \end{equation*}
    In particular, for every point $(a,b) \in W_\beta$,
    with $a \neq b$, there is a sequence
    $n_k$ such that $(T^{n_k}a, T^{n_k}b) \rightarrow (a,b)$.
    And also, by Lemma
    \ref{le:diagonal_no_suporte},
    if we pick $c \in Z$, then $(c,c) \in W$.
    And therefore, there is a sub-sequence
    $m_k$ such that $(T^{m_k}a, T^{m_k}b) \rightarrow (c,c)$.
    That is, any such pair $(a,b)$ is a Li-Yorke pair.
  \end{proof}

  The following lemma is a simple topological fact.

  \begin{lemma}
    \label{lemma:endomorfismo_com_nucleo_compacto_eh_proprio}

    Let $\function{\phi}{G}{G}$ be a surjective endomorphism of a
    Lie group $G$.  Then,
    \begin{equation*}
      \phi \text{ is proper}
      \Leftrightarrow
      \ker(\phi) \text{ is compact}.
    \end{equation*}
  \end{lemma}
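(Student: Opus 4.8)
The statement to prove is: for a surjective endomorphism $\function{\phi}{G}{G}$ of a Lie group $G$, $\phi$ is proper if and only if $\ker(\phi)$ is compact. Let me think about both directions.

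The forward direction (proper implies compact kernel) is immediate: if $\phi$ is proper, then the preimage of the compact set $\{e\}$ is compact, and $\phi^{-1}(\{e\}) = \ker(\phi)$.

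The reverse direction is the substantive one.

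---

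The plan is to prove each implication separately, as the two directions have very different characters.

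For the direction $\phi \text{ proper} \Rightarrow \ker(\phi) \text{ compact}$, I would simply observe that the singleton $\{e\}$ is compact, so by definition of properness its preimage $\phi^{-1}(\{e\}) = \ker(\phi)$ is compact. This is a one-line argument.

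For the converse, $\ker(\phi) \text{ compact} \Rightarrow \phi \text{ proper}$, the plan is to take an arbitrary compact set $K \subset G$ and show that $\phi^{-1}(K)$ is compact. The key structural fact is that $\phi$ is a surjective continuous homomorphism whose kernel $N = \ker(\phi)$ is compact, so $\phi$ factors as an isomorphism $\bar\phi \colon G/N \to G$ of topological groups (surjectivity gives that $\bar\phi$ is onto, and the universal property of the quotient gives continuity; one then checks $\bar\phi$ is a homeomorphism, using that a continuous bijective homomorphism of Lie groups with the same dimension, or more robustly an open-mapping argument for locally compact $\sigma$-compact groups, is an isomorphism). Granting this, $\phi^{-1}(K) = q^{-1}\bigl(\bar\phi^{-1}(K)\bigr)$ where $q \colon G \to G/N$ is the quotient map. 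Since $\bar\phi$ is a homeomorphism, $\bar\phi^{-1}(K)$ is compact in $G/N$. The remaining point is that the quotient map $q$ by a compact subgroup is itself a proper map: the preimage of a compact set under $q$ is compact. This is where the compactness of $N$ is really used — $q^{-1}(\bar\phi^{-1}(K))$ is, set-theoretically, the saturation $\bar\phi^{-1}(K)\cdot N$ of a compact set by a compact group, hence compact as the image of the compact set $\bar\phi^{-1}(K) \times N$ under the continuous multiplication map lifted appropriately.

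I expect the main obstacle to be the careful justification that $q$ is proper, i.e.\ that quotienting by a \emph{compact} subgroup pulls compact sets back to compact sets. One clean way to see this: choose a compact set $C \subset G$ with $q(C) = \bar\phi^{-1}(K)$ (possible since $q$ is a continuous open surjection and $\bar\phi^{-1}(K)$ is compact, so it is covered by finitely many $q$-images of compact neighborhoods), and then $q^{-1}(\bar\phi^{-1}(K)) \subset C \cdot N$, which is compact as the continuous image of $C \times N$ under multiplication. Being a closed subset of a compact set, $\phi^{-1}(K)$ is then compact. The only subtlety is producing such a $C$, which relies on local compactness of $G$ together with compactness of the target set; this is routine but is the crux of the argument.
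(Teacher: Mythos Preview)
Your proof is correct. The forward direction matches the paper exactly. For the converse, the paper argues more directly: a surjective endomorphism of a Lie group is automatically an open map, its fibers are cosets of $\ker(\phi)$ and hence compact, and the paper then invokes the standard fact that a continuous open surjection with compact fibers is proper. Your argument says the same thing in a slightly longer way: you factor $\phi = \bar\phi \circ q$ through the quotient $G/\ker(\phi)$, argue that $\bar\phi$ is a homeomorphism, and then prove directly that $q$ is proper via the $C \cdot N$ argument. That last step is essentially a proof of the ``open surjection with compact fibers is proper'' fact, applied to $q$ rather than to $\phi$ itself. The detour through the quotient and the verification that $\bar\phi$ is a homeomorphism are not needed---you could run your $C \cdot N$ argument on $\phi$ directly (using that $\phi$ is open and has fibers homeomorphic to $\ker(\phi)$). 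One small imprecision: when you produce a compact $C$ from finitely many compact neighborhoods, you get $q(C) \supset \bar\phi^{-1}(K)$ rather than equality, but your subsequent inclusion $q^{-1}(\bar\phi^{-1}(K)) \subset C \cdot N$ only needs $\supset$, so the argument still goes through.
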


  \begin{proof}
    Being $\phi$ a proper mapping, $\ker(\phi)$ is evidently compact,
    since it is the inverse image of $\set{1_G}$.
    On the other hand, a surjective endomorphism is always an open
    mapping.  If $\ker(\phi)$ is compact,
    then $\phi$ is a continuous surjection with compact fibers.
    It is a known fact that any continuous open surjection with
    compact fibers is a proper mapping.
  \end{proof}

  We end this preliminary section with the following characterization of the recurrent set of a linear isomorphism of a finite dimensional vector space proved in Proposition 4.2 in \cite{patrao:entropia}.

  \begin{proposition}
    \label{prop:recorente isomorfismo linear}

    Let $\function{T}{V}{V}$ be a linear isomorphism of a finite dimensional vector space and consider its multiplicative Jordan decomposition $T = T_E T_H T_U$. The recurrent set of $T$ is given by
  \begin{equation*}
    \recurrent(T)
    =
    \fix(T_H)
    \cap
    \fix(T_U).
  \end{equation*}
  \end{proposition}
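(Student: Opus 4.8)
The plan is to analyze the asymptotic behavior of the orbit of a point $v \in V$ under the linear isomorphism $T$ by exploiting the multiplicative Jordan decomposition $T = T_E T_H T_U$, where the three factors commute. Recall that $T_E$ is elliptic (semisimple with eigenvalues of modulus one, hence conjugate to an orthogonal transformation), $T_H$ is hyperbolic (semisimple, positive, with real eigenvalues; conjugate to a diagonal matrix with positive entries), and $T_U$ is unipotent. My first step is to recast the problem in terms of the three subgroups generated by these factors. Since the factors commute, I can study their individual recurrence properties and then combine them. The inclusion $\recurrent(T) \subset \fix(T_H) \cap \fix(T_U)$ and its converse will be handled separately.

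For the hyperbolic and unipotent parts, the key observation is that these have no nontrivial recurrence off their fixed-point sets. First I would show that if $v$ is recurrent for $T$, then $v$ must be fixed by $T_H$ and by $T_U$. The natural tool is to decompose $V$ into the generalized eigenspaces; on the part where $T_H$ acts with an eigenvalue $\abs{\lambda} \neq 1$, the hyperbolic action pushes vectors monotonically toward zero or infinity (measured in an adapted norm), which is incompatible with the orbit returning near $v$. Similarly, on the complement of $\fix(T_U)$, the unipotent part produces polynomial growth of the relevant coordinates along the orbit, again destroying recurrence. The elliptic part $T_E$, being conjugate to an orthogonal map, is the only factor that can genuinely contribute recurrence, since its orbits stay on compact spheres and return near their starting point by a standard equicontinuity or Kronecker/almost-periodicity argument. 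So the forward inclusion $\recurrent(T) \subset \fix(T_H) \cap \fix(T_U)$ comes from showing that failure to be fixed by either $T_H$ or $T_U$ forces the orbit to escape.

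For the reverse inclusion, I would take $v \in \fix(T_H) \cap \fix(T_U)$ and show it is recurrent. On this set $T$ acts as $T_E$ alone (since $T_H v = v$ and $T_U v = v$, so $Tv = T_E T_H T_U v = T_E v$, and one checks this subspace is $T_E$-invariant). Because $T_E$ is conjugate to an orthogonal transformation, its restriction to the invariant subspace $\fix(T_H) \cap \fix(T_U)$ is a bounded, distance-preserving-up-to-conjugacy map, so every point is recurrent: there is a sequence $n_k \to \infty$ with $T_E^{n_k} \to \identity$ uniformly on compact sets, whence $T^{n_k} v \to v$. This yields $\fix(T_H) \cap \fix(T_U) \subset \recurrent(T)$ and completes the characterization.

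The main obstacle I anticipate is making precise and rigorous the claim that movement off $\fix(T_H)$ or $\fix(T_U)$ is genuinely irreversible in the presence of the commuting elliptic rotation, since the elliptic part could in principle conspire with slow polynomial unipotent drift. The cleanest way around this is to choose a norm adapted to the Jordan decomposition in which $T_E$ is an isometry, so that the modulus $\norm{T^n v}$ is governed entirely by $T_H$ and $T_U$; then one argues that along the hyperbolic directions the norm contribution is a genuine exponential factor $\abs{\lambda}^n$ and along the unipotent directions a genuine polynomial factor, neither of which can return to the value at $n=0$ for infinitely many $n$ unless the corresponding components vanish. Isolating these growth rates from the bounded elliptic oscillation is the technical heart of the argument, and it is exactly this step I would write out most carefully.
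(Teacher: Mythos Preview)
The paper does not actually prove this proposition: it is stated in the Preliminaries as a result quoted from Proposition~4.2 of \cite{patrao:entropia}, with no argument given. So there is no in-paper proof to compare against.

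Your outline is the standard direct argument and is correct in substance. The reverse inclusion is exactly as you say: on $\fix(T_H)\cap\fix(T_U)$ the action reduces to $T_E$, and since the closure of $\{T_E^n\}$ in the group of isometries of an adapted inner product is a compact abelian group, there exist $n_k\to\infty$ with $T_E^{n_k}\to\identity$, giving recurrence of every point. For the forward inclusion, your strategy is also right, and the worry you raise about the elliptic part ``conspiring'' with the unipotent drift is in fact a non-issue once you fix an inner product making $T_E$ orthogonal: then $\norm{T^n v}=\norm{T_H^n T_U^n v}$, and the argument splits cleanly. First project onto the generalized eigenspaces to see that recurrence forces the components with $\abs{\lambda}\neq 1$ to vanish (the norm of such a component behaves like $\abs{\lambda}^n$ times a polynomial, hence tends to $0$ or $\infty$), so $v\in\fix(T_H)$. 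Then, writing $T_U=I+N$ with $N$ nilpotent and letting $k$ be the largest index with $N^k v\neq 0$, the expansion $T_U^n v=\sum_{j=0}^k\binom{n}{j}N^j v$ shows $\norm{T_U^n v}\to\infty$ unless $k=0$, i.e.\ unless $v\in\fix(T_U)$. That is the whole ``technical heart'' you flag, and it is short.
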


\section{The Abelian Case}
  \label{sec:caso_abeliano}

  In this section, we determine the entropy of a surjective endomorphism $\phi$ of connected abelian Lie group $G$. In this case, the exponential map is a
  surjective group homomorphism with discrete kernel.
  Therefore, $G$ can be identified with $T(G) \times \reals^q$,
  where $T(G)$ is isomorphic to the $p$-dimensional torus
  $\reals^p / \integers^p$ and the group operation is addition.
  The endomorphisms of $G$ can be identified with linear maps of the
  form
  \begin{equation*}
    \phi
    =
    \Matrix{cc}
    {
      T & *
      \\
      0 & S
    },
  \end{equation*}
  where $\function{T}{\reals^p}{\reals^p}$ is a linear map that leaves
  $\integers^p$ invariant, and $\function{S}{\reals^q}{\reals^q}$ is a
  linear isomorphism.
  Notice that for $(x,y) \in T(G) \times \reals^q$, the action of
  $\phi$ have the form
  \begin{equation*}
    \phi^n(x,y) = (*,S^n y).
  \end{equation*}
  In particular, if $(x,y)$ is a recurrent point,
  then $y \in \recurrent(S)$ and thus
  \begin{equation*}
    \closure{\recurrent(\phi)} \subset T(G) \times \closure{\recurrent(S)}.
  \end{equation*}
  But since $S$ and $S_E$ coincide in $\closure{\recurrent(S)}$, we have that $\phi$ and
  \begin{equation*}
    \Matrix{cc}
    {
      T & *
      \\
      0 & S_E
    }
  \end{equation*}
  coincide in $T(G) \times \closure{\recurrent(S)}$.
  And since $S_E$ has only eigenvalues with modulus $1$, Bowen's
  formula and the variational principle gives that
  \begin{equation*}
    \topologicalentropy{\phi}
    \leq
    \bowenentropy{d}{\phi}
    =
    \bowenentropy{d}{\phi|_{T(G)}}
    =
    \topologicalentropy{\phi|_{T(G)}}
    \leq
    \topologicalentropy{\phi},
  \end{equation*}
  where $d$ is an invariant distance.
  That is, the topological entropy of $\phi$ is just the
  topological entropy of $\phi$ restricted to its toral component.


\section{The Nilpotent Case}
  \label{sec:caso_nilpotente}


\subsection{Properness}

  We start proving that every surjective endomorphism $\phi$ of a connected
  nilpotent Lie group $G$ is a proper map.
  Since the $\phi'$ is a surjective linear endomorphism of the Lie
  algebra, it is a linear isomorphism and thus a proper map.
  If the $G$ is simply-connected nilpotent, we have $\phi$ is conjugated to
  its differential at the identity $\phi'$ and the following diagram commutes
  \begin{equation*}
    \xymatrix
    {
      \liealgebra{g} \ar[d]_{\exp}  \ar[r]^{\phi'}                     & \liealgebra{g} \ar[d]^{\exp} \\
      G  \ar[r]_{\phi}  & G
    }
  \end{equation*}
  Thus $\phi$ is an automorphism and a proper map. We can reduce the general connected nilpotent case to the simply-connected one. But first we need the following lemmas.

  \begin{proposition}\label{propGTGsimplesmenteconexo}
    Let $G$ be a nilpotent Lie group and $T(G)$ its toral component.
    Then $G/T(G)$ is simply connected.
  \end{proposition}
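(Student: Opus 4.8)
The plan is to pass to the universal cover and reduce the statement to a quotient by a linear subspace. Let $\tilde{G}$ be the universal covering group of $G$, with covering homomorphism $\function{\pi}{\tilde{G}}{G}$, so that $G = \tilde{G}/\Gamma$ with $\Gamma = \ker \pi \cong \pi_1(G)$ a discrete subgroup. Since $\Gamma$ is a discrete normal subgroup of the connected group $\tilde{G}$, it is central, i.e. $\Gamma \subset Z(\tilde{G})$. As $G$ is nilpotent so is $\tilde{G}$; and for a simply connected nilpotent Lie group the exponential map is a diffeomorphism, $\tilde{G}$ is diffeomorphic to $\reals^n$, and the center $Z(\tilde{G}) = \exp(\liealgebra{z})$ is connected and isomorphic to a vector group $\reals^k$, where $\liealgebra{z}$ is the center of the Lie algebra.

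First I would pin down the center of $G$ and the subgroup $T(G)$. I claim that $\pi^{-1}(Z(G)) = Z(\tilde{G})$. Indeed, given $g \in Z(G)$ and a lift $\tilde{g}$, the map $x \mapsto \tilde{g} x \tilde{g}^{-1} x^{-1}$ is continuous, takes values in the discrete set $\Gamma$ (because $g$ is central in $G$), and sends the identity to the identity; since $\tilde{G}$ is connected it must be constantly the identity, so $\tilde{g} \in Z(\tilde{G})$, and the reverse inclusion is immediate. Hence $Z(G) = Z(\tilde{G})/\Gamma \cong \reals^k/\Gamma$ is connected. Writing $V = \operatorname{span}_{\reals}(\Gamma)$ for the real span of $\Gamma$, so that $\Gamma$ is a full lattice of some rank $m$ in $V \cong \reals^m$ and $\reals^k/\Gamma \cong (V/\Gamma) \times \reals^{k-m}$, the toral component $T(G)$ of $Z(G)$ is exactly $V/\Gamma \cong \torus^m$, since the complementary $\reals^{k-m}$ factor contains no nontrivial compact subgroup.

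The key computation is then that the preimage of $T(G)$ under $\pi$ is precisely $V$: since $\Gamma \subset V \subset Z(\tilde{G})$ and $\pi^{-1}(Z(G)) = Z(\tilde{G})$, taking preimages of $T(G) = V/\Gamma$ gives $V$. As $V$ is central in $\tilde{G}$ it is normal, and the third isomorphism theorem yields
\begin{equation*}
  G/T(G) = (\tilde{G}/\Gamma)\big/(V/\Gamma) \cong \tilde{G}/V.
\end{equation*}
Finally I would read off simple connectedness from the fibration $V \to \tilde{G} \to \tilde{G}/V$: the associated long exact homotopy sequence contains $\pi_1(\tilde{G}) \to \pi_1(\tilde{G}/V) \to \pi_0(V)$, and since $\tilde{G}$ is simply connected and $V \cong \reals^m$ is connected, both outer terms vanish, forcing $\pi_1(\tilde{G}/V) = 0$. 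Therefore $G/T(G) \cong \tilde{G}/V$ is simply connected.

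The step I expect to require the most care is the identification of $T(G)$ together with its preimage $V$: this rests on computing $Z(G) = Z(\tilde{G})/\Gamma$ via the continuity-and-discreteness argument above, and on the elementary fact that a discrete subgroup of $\reals^k$ is a lattice in its real span, so that the quotient splits off a genuine torus. Once $G/T(G)$ has been exhibited as the quotient of the simply connected group $\tilde{G}$ by the \emph{connected} central subgroup $V$, the conclusion is a routine application of the homotopy exact sequence. It is worth stressing that connectedness of $V$ is exactly what makes the argument work: quotienting by the full disconnected $\Gamma$ would reintroduce fundamental group, which is why dividing out the torus $V/\Gamma$, rather than a larger portion of the center, is what trivializes $\pi_1$.
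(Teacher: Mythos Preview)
Your proof is correct and follows essentially the same route as the paper: pass to the simply connected cover $\widetilde{G}$, identify $\pi^{-1}(T(G))$ with a vector subspace (your $V$, the paper's $\widetilde{T}$) of $Z(\widetilde{G})\cong\reals^k$, and use the third isomorphism theorem to obtain $G/T(G)\cong\widetilde{G}/V$. The only cosmetic differences are that you spell out the commutator argument for $\pi^{-1}(Z(G))=Z(\widetilde{G})$ and the identification $V=\operatorname{span}_{\reals}(\Gamma)$, and that you conclude via the homotopy exact sequence of $V\to\widetilde{G}\to\widetilde{G}/V$ rather than by noting directly that $\widetilde{G}/V$ is diffeomorphic to a quotient of vector spaces.
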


  \begin{proof}
    Let $\pi: \widetilde{G} \to G$ be the universal covering of $G$. We have that $G$ is isomorphic to $\widetilde{G} / \ker(\pi)$. Besides this, we have that $Z(\widetilde{G})$ isomorphic to a finite dimensional vector space and that
    \begin{equation*}
    Z(G)_0 = Z(G) = \pi\left(Z(\widetilde{G})\right).
    \end{equation*}
    Therefore $Z(G)_0$ is isomorphic to $Z(\widetilde{G})/ \ker(\pi)$ and thus $T(G)$ is isomorphic to $\widetilde{T} / \ker(\pi)$, where $\widetilde{T} = \pi^{-1}\left(T(G))\right)$ is isomorphic to a vector subspace. On the other hand, we have that
    \begin{equation*}
    \frac{G}{T(G)} \simeq \frac{\widetilde{G}/\ker(\pi)}{\widetilde{T} / \ker(\pi)} \simeq \frac{\widetilde{G}}{\widetilde{T}},
    \end{equation*}
    which shows that $G/T(G)$ is simply connected, since $\widetilde{G}/\widetilde{T}$ is homeomorphic to the quotient of a vector space by a vector subspace.
  \end{proof}

  \begin{proposition}
    \label{prop:endomorfismo_sobrejetivo_de_nilpotente_eh_proprio}

    Let $\function{\phi}{G}{G}$ be a surjective endomorphism of a
    nilpotent connected Lie group $G$.
    Then $\phi$ is a proper map.
  \end{proposition}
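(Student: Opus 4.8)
The plan is to reduce the general connected nilpotent case to the simply-connected case already handled at the beginning of the section, using Lemma~\ref{lemma:endomorfismo_com_nucleo_compacto_eh_proprio}, which tells us that it suffices to show $\ker(\phi)$ is compact. First I would record the structural decomposition just established: by Proposition~\ref{propGTGsimplesmenteconexo} the quotient $G/T(G)$ is simply connected (hence isomorphic to a Euclidean space as a manifold), where $T(G)$ is the toral component sitting inside the center $Z(G)_0$. The idea is to split the problem across the exact sequence $T(G) \hookrightarrow G \twoheadrightarrow G/T(G)$, exploiting that $T(G)$ is characteristic (it is the unique maximal torus in the center, so every endomorphism preserves it) so that $\phi$ descends to an endomorphism of the simply-connected group $G/T(G)$.

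The key steps I would carry out in order are as follows. First, verify that $\phi(T(G)) \subset T(G)$: since $T(G)$ is the toral component of the center, it is a topological invariant preserved by any continuous endomorphism, so $\phi$ induces a commuting diagram with the restriction $\phi|_{T(G)}$ on the fiber and an induced map $\overline{\phi}$ on $G/T(G)$. Second, argue that $\overline{\phi}$ is a surjective endomorphism of the simply-connected nilpotent group $G/T(G)$, and hence, by the simply-connected case treated at the start of the subsection, $\overline{\phi}$ is an automorphism and in particular proper, so $\ker(\overline{\phi})$ is trivial. Third, analyze $\ker(\phi)$ using this: any $g \in \ker(\phi)$ projects to $\ker(\overline{\phi}) = \{1\}$ in $G/T(G)$, so $\ker(\phi) \subset T(G)$. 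Since $T(G)$ is a compact torus and $\ker(\phi)$ is a closed subgroup of it, $\ker(\phi)$ is compact. Finally, invoking Lemma~\ref{lemma:endomorfismo_com_nucleo_compacto_eh_proprio}, compactness of the kernel of the surjective endomorphism $\phi$ yields that $\phi$ is proper.

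The main obstacle I anticipate is the second step: confirming that the induced map $\overline{\phi}$ on $G/T(G)$ is genuinely surjective and falls under the simply-connected analysis, and that $\overline{\phi}$ being an automorphism really forces $\ker(\overline{\phi})$ to be trivial. Surjectivity of $\overline{\phi}$ follows from surjectivity of $\phi$ together with the fact that the projection $G \to G/T(G)$ is surjective, so the composite $G \to G \to G/T(G)$ is onto and factors through $\overline{\phi}$. The delicate point is that the simply-connected argument gives $\overline{\phi}$ conjugate via $\exp$ to its differential $\overline{\phi}'$, a linear isomorphism of the Lie algebra of $G/T(G)$; since $\exp$ is a diffeomorphism in the simply-connected nilpotent setting, this forces $\overline{\phi}$ to be injective, i.e.\ $\ker(\overline{\phi})$ trivial. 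One subtlety to check carefully is that $T(G)$ is indeed preserved by $\phi$ and that the quotient construction is compatible with the differential, but these follow from the naturality of the exponential map and the characteristic nature of the center, so I expect no essential difficulty beyond careful bookkeeping.
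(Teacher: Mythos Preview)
Your proposal is correct and follows essentially the same route as the paper: pass to the simply-connected quotient $G/T(G)$ using Proposition~\ref{propGTGsimplesmenteconexo}, observe that the induced endomorphism there is an automorphism (hence has trivial kernel), deduce $\ker(\phi)\subset T(G)$, and conclude via Lemma~\ref{lemma:endomorfismo_com_nucleo_compacto_eh_proprio}. The only cosmetic differences are in how the invariance $\phi(T(G))\subset T(G)$ is justified and how the inclusion $\ker(\phi)\subset T(G)$ is written out, but the argument is the same.
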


  \begin{proof}
  By Proposition \ref{propGTGsimplesmenteconexo}, we have that $\widetilde{G} = G/T(G)$ is simply connected. Besides this, for any surjective
  endomorphism $\function{\phi}{G}{G}$, there is a surjective endomorphism
  $\function{\widetilde{\phi}}{\widetilde{G}}{\widetilde{G}}$ such that the following diagram commutes
  \begin{equation*}
    \xymatrix{
      G      \ar[d]_{\pi} \ar[r]^\phi &G \ar[d]^{\pi}
      \\
      \widetilde{G} \ar[r]_{\widetilde{\phi}}      & \widetilde{G}
    }
  \end{equation*}
  where $\pi$ is the canonical projection. In fact, since $\phi \left( T(G) \right)$ is compact and
  abelian, it is necessarily contained in $T(G)$.
  Thus we can define
  \begin{equation*}
  \widetilde{\phi}(\pi(x)) = \pi(\phi(x)) = \phi(x) T(G).
  \end{equation*}
  By the above discussion, since $\widetilde{G}$ is simply connected, we have that $\widetilde{\phi}$ is an automorphism of $\widetilde{G}$. Now, we claim that $\ker(\phi)$ is a closed subset of the compact set $T(G)$. In fact, we have that
    \begin{align*}
      \phi^{-1}
      \left(
        1_G
      \right)
      &\subset
      \phi^{-1}
      \left(
        T(G)
      \right)
      \\
      &=
      \phi^{-1}
      \left(
        \pi^{-1}(1_{\widetilde{G}})
      \right)
      \\
      &=
      \pi^{-1}
      \left(
        \widetilde{\phi}^{-1}(1_{\widetilde{G}})
      \right)
      \\
      &=
      \pi^{-1}
      \left(
        1_{\widetilde{G}}
      \right)
      \\
      &=
      T(G).
    \end{align*}
    Thus we have that $\ker(\phi)$ is compact and, by Lemma \ref{lemma:endomorfismo_com_nucleo_compacto_eh_proprio}, we have that $\phi$ is a proper map.
  \end{proof}


\subsection{Topological Entropy}

  In this subsection, we show that, as in the abelian case, the entropy
  of a surjective endomorphism of a nilpotent Lie group $G$ coincides with
  the entropy of its restriction to the toral component of $G$.

  \begin{theorem}\label{teoentropianilpotente}
    Let $\function{\phi}{G}{G}$ be a surjective endomorphism of a
    connected nilpotent Lie group $G$.
    Then
    \begin{equation*}
      \topologicalentropy{\phi}
      =
      \topologicalentropy{\phi|_{T(G)}}.
    \end{equation*}
  \end{theorem}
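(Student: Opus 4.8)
The plan is to show that the topological entropy of $\phi$ is carried by its recurrent set, and that this recurrent set is contained in a closed $\phi$-invariant Lie subgroup $H \supset T(G)$ on which every eigenvalue of $(\phi|_H)'$ of modulus greater than $1$ already comes from the differential of $\phi|_{T(G)}$. Once such an $H$ is available, Bowen's formula computes $\bowenentropy{d}{\phi|_H}$ and $\bowenentropy{d}{\phi|_{T(G)}}$ from the same set of expanding eigenvalues, and the variational principle turns this into the desired equality.

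First I would pass to the quotient. By Proposition \ref{propGTGsimplesmenteconexo} the group $\widetilde{G} = G/T(G)$ is simply connected, and, as recalled in the proof of Proposition \ref{prop:endomorfismo_sobrejetivo_de_nilpotente_eh_proprio}, $\phi$ descends to an automorphism $\function{\widetilde{\phi}}{\widetilde{G}}{\widetilde{G}}$ with $\pi \circ \phi = \widetilde{\phi} \circ \pi$, where $\function{\pi}{G}{\widetilde{G}}$ is the projection. Since $\widetilde{G}$ is simply connected nilpotent, $\widetilde{\phi}$ is conjugate through $\exp$ to the linear isomorphism $\widetilde{\phi}'$, so Proposition \ref{prop:recorente isomorfismo linear} gives $\recurrent(\widetilde{\phi}) = \exp(\widetilde{V})$, where $\widetilde{V} = \fix(\widetilde{\phi}'_H) \cap \fix(\widetilde{\phi}'_U)$ and $\widetilde{\phi}' = \widetilde{\phi}'_E \widetilde{\phi}'_H \widetilde{\phi}'_U$ is the multiplicative Jordan decomposition. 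Because the Jordan components of an automorphism of $\widetilde{\liealgebra{g}}$ are themselves automorphisms, the space $\widetilde{V}$ is a subalgebra and $\widetilde{H} = \exp(\widetilde{V})$ a closed connected subgroup of $\widetilde{G}$; moreover on $\widetilde{V}$ the map $\widetilde{\phi}'$ acts as $\widetilde{\phi}'_E$, whose eigenvalues all have modulus $1$. I then set $H = \pi^{-1}(\widetilde{H})$, a closed connected nilpotent subgroup containing $T(G) = \ker \pi$ and satisfying $\phi(H) \subset H$.

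Next I would record two facts. Equivariance of $\pi$ shows that recurrent points of $\phi$ project to recurrent points of $\widetilde{\phi}$, and since $\recurrent(\widetilde{\phi})$ is closed this yields $\closure{\recurrent(\phi)} \subset \pi^{-1}(\recurrent(\widetilde{\phi})) = H$. And since $\phi(T(G)) \subset T(G)$—its image being compact and abelian—the subalgebra $\liealgebra{t}$ of $T(G)$ is $\phi'$-invariant, so the differential $(\phi|_H)'$ is block-triangular with diagonal blocks $(\phi|_{T(G)})'$ on $\liealgebra{t}$ and $\widetilde{\phi}'|_{\widetilde{V}}$ on $\widetilde{V}$; as the latter has only modulus-$1$ eigenvalues, the eigenvalues of $(\phi|_H)'$ of modulus greater than $1$ coincide with those of $(\phi|_{T(G)})'$. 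Bowen's formula, applied with a right invariant distance $d$, then gives $\bowenentropy{d}{\phi|_H} = \bowenentropy{d}{\phi|_{T(G)}}$. To exploit this I would prove that entropy concentrates on the recurrent set: by the variational principle (Proposition \ref{prop:principio_variacional:patrao}) together with Poincaré recurrence, every $\phi$-invariant probability measure is supported on $\closure{\recurrent(\phi)}$, whence $\topologicalentropy{\phi} = \topologicalentropy{\phi|_{\closure{\recurrent(\phi)}}}$; all restrictions involved are proper, being restrictions of the proper map $\phi$ to closed invariant sets. Combining these observations gives
\begin{align*}
  \topologicalentropy{\phi|_{T(G)}}
  &\leq
  \topologicalentropy{\phi}
  =
  \topologicalentropy{\phi|_{\closure{\recurrent(\phi)}}}
  \leq
  \topologicalentropy{\phi|_{H}}
  \\
  &\leq
  \bowenentropy{d}{\phi|_{H}}
  =
  \bowenentropy{d}{\phi|_{T(G)}}
  =
  \topologicalentropy{\phi|_{T(G)}},
\end{align*}
the first inequality coming from restriction to an invariant subset and the last equality being the classical fact that on the compact torus $T(G)$ Bowen's formula computes the topological entropy exactly. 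Every term is therefore equal, which is the assertion.

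The main obstacle I expect is not the entropy bookkeeping but the two structural inputs that make it legitimate: first, that the hyperbolic and unipotent Jordan components of the automorphism $\widetilde{\phi}'$ are again Lie algebra automorphisms, so that $\widetilde{V}$ is a subalgebra and $H$ a genuine subgroup to which Bowen's formula applies; and second, the careful justification that topological entropy is carried by $\closure{\recurrent(\phi)}$ in this non-compact, merely proper setting, for which one must check both that Poincaré recurrence places every invariant probability measure on the recurrent set and that the variational principle may be applied to the proper restriction $\phi|_{\closure{\recurrent(\phi)}}$.
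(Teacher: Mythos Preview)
Your argument is correct and shares its opening move with the paper: both pass to $\widetilde{G}=G/T(G)$, identify $\recurrent(\widetilde{\phi})$ via Proposition~\ref{prop:recorente isomorfismo linear}, and observe that $\closure{\recurrent(\phi)}\subset \pi^{-1}(\recurrent(\widetilde{\phi}))$. The routes then diverge. You recognize that $\pi^{-1}(\recurrent(\widetilde{\phi}))$ is actually a closed connected Lie subgroup $H$---because the Jordan components of $\widetilde{\phi}'$ are themselves Lie algebra automorphisms, so their common fixed space is a subalgebra---and then apply Bowen's formula to the endomorphism $\phi|_H$, comparing eigenvalues directly. The paper never invokes this algebraic fact; instead it only uses that $\widetilde{\phi}$ acts isometrically on $\recurrent(\widetilde{\phi})$, chooses compact $\widetilde{\phi}$-invariant balls $B\subset\recurrent(\widetilde{\phi})$, sets $K=\pi^{-1}(B)$, and shows via an admissible-cover argument together with Proposition~\ref{prop:entropia_soma_em_espacos_homogeneos} (the compact $T(G)$-principal bundle formula) that $\topologicalentropy{\phi|_K}=\topologicalentropy{\phi|_{T(G)}}$. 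Your route is shorter and avoids the cover manipulation and the principal-bundle entropy formula, at the price of the lemma that $\mathrm{Aut}(\widetilde{\liealgebra{g}})$, being a real algebraic group, contains the elliptic, hyperbolic and unipotent parts of each of its elements; the paper's route is more self-contained in that it needs no structure on $\recurrent(\widetilde{\phi})$ beyond the isometry property. The second obstacle you flag---that entropy is carried by $\closure{\recurrent(\phi)}$---is used equally by the paper, which writes $\topologicalentropy{\phi}=\topologicalentropy{\phi|_R}$ without further comment.
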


  \begin{proof}
    Let $\widetilde{G} = G/T(G)$, and denote by $\function{\pi}{G}{G/T(G)}$
    the canonical projection.
    By Proposition \ref{propGTGsimplesmenteconexo}, we have that $\widetilde{G}$ is a simply-connected nilpotent Lie group and, as in the proof of Proposition \ref{prop:endomorfismo_sobrejetivo_de_nilpotente_eh_proprio}, we can consider the induced
    endomorphism $\function{\widetilde{\phi}}{\widetilde{G}}{\widetilde{G}}$.  We have that $\function{\widetilde{\phi}}{\widetilde{G}}{\widetilde{G}}$ is    conjugated to its differential at the unity $\widetilde{\phi}'$ through
    the exponential map.

    On the other hand, we have that $\widetilde{\phi}'$ is a linear map and, by Proposition 4.2 in \cite{patrao:entropia},
    that its recurrent set
    $\recurrent(\widetilde{\phi}')$ is closed.
    We also know that there is a norm in $\widetilde{\liealgebra{g}}$ such that
    $\widetilde{\phi}'|_{\recurrent(\widetilde{\phi}')}$ is an isometry.
    In particular, for any closed ball
    $B \subset \widetilde{\liealgebra{g}}$ centered at $0$,
    $B \cap \recurrent(\widetilde{\phi}')$ is compact and $\widetilde{\phi}'$-invariant.
    From the conjugation given by the exponential map,
    there is a distance in $\recurrent(\widetilde{\phi})$
    such that any closed ball $B \subset \recurrent(\widetilde{\phi})$
    centered at the unit is compact $\widetilde{\phi}$-invariant.

    Let $R = \pi^{-1}(\recurrent(\widetilde{\phi}))$.
    Then, since $R$ is closed, it follows that
    $\closure{\recurrent(\phi)} \subset R$.
    For any $\varepsilon > 0$,
    there exists an admissible covering
    $\family{A} = \set{A_0, \dotsc, A_k}$ of
    $R$, such that
    \begin{equation*}
      \topologicalentropy{\phi}
      -
      \varepsilon
      =
      \topologicalentropy{\phi|_R}
      -
      \varepsilon
      \leq
      \tcoverentropy{\family{A}}{\phi_R}.
    \end{equation*}
    This admissible cover can be chosen in a way that $A_0$ has
    compact complement, and $A_1, \dotsc, A_k$ have compact closure,
    since, in a locally compact space, any admissible covering can be
    refined in this way.
    Let then $B \subset \recurrent(\widetilde{\phi})$
    be a compact $\widetilde{\phi}$-invariant ball such that
    $A_1, \dotsc, A_k$ all fall in $B$.
    Denoting $K = \pi^{-1}(B)$, it follows that
    $K$ is compact (since $\pi$ is proper),
    and $\covercardinality[R \setminus K]{\family{A}^n} = 1$
    (for $K$ is $\phi$-invariant and $K \subset A_0$).
    So,
    \begin{align*}
      \tcoverentropy{\family{A}}{\phi|_R}
      &\leq
      \tcoverentropy{\family{A} \vee \set{K, \complementset{K}}}{\phi|_R}
      \\
      &=
      \lim \frac{1}{n}
      \log
      \left(
        \covercardinality[K]{\family{A}^n}
        +
        \covercardinality[R \setminus K]{\family{A}^n}
      \right)
      \\
      &=
      \lim \frac{1}{n}
      \log
      \left(
        \covercardinality[K]{\family{A}^n}
        +
        1
      \right)
      \\
      &=
      \lim \frac{1}{n}
      \log
      \covercardinality[K]{\family{A}^n}
      \\
      &=
      \tcoverentropy{\family{A} \cap K}{\phi|_K}.
    \end{align*}
    Therefore,
    \begin{equation*}
      \topologicalentropy{\phi}
      -
      \varepsilon
      \leq
      \tcoverentropy{\family{A}}{\phi|_R}
      \leq
      \tcoverentropy{\family{A} \cap K}{\phi|_K}
      \leq
      \topologicalentropy{\phi|_K}.
    \end{equation*}
    On the other hand, applying Proposition
    \ref{prop:entropia_soma_em_espacos_homogeneos} for the compact $T(G)$-principal bundle $\pi|_K : K \to B$,
    since $\widetilde{\phi}$ is an isometry when restricted to
    $B$, we conclude that
    \begin{equation*}
      \topologicalentropy{\phi|_K}
      =
      \topologicalentropy{\widetilde{\phi}|_{B}}
      +
      \topologicalentropy{\phi|_{T(G)}}
      =
      \topologicalentropy{\phi|_{T(G)}}.
    \end{equation*}
    This way,
    \begin{equation*}
      \topologicalentropy{\phi}
      -
      \varepsilon
      \leq
      \topologicalentropy{\phi|_{T(G)}}
      \leq
      \topologicalentropy{\phi}.
    \end{equation*}
    And since $\varepsilon$ was arbitrary, it follows that
    \begin{equation*}
      \topologicalentropy{\phi}
      =
      \topologicalentropy{\phi|_{T(G)}}.
    \end{equation*}
  \end{proof}


\section{The Semi-simple Case}
  \label{sec:caso_semi_simples}


\subsection{Properness}

  We start proving that every surjective endomorphism $\phi$ of a connected
  semi-simple Lie group $G$ is a proper map. In fact, we show that, in this case, every surjective endomorphism is an automorphism.

  \begin{proposition}
    \label{prop:endomorfismo_sobrejetivo_de_semisimples_eh_automorfismo}

    Let $\function{\phi}{G}{G}$ be a surjective endomorphism of a
    semi-simple connected Lie group $G$.  Then there is a
    $k \in \naturals$ such that $\phi^k = C_g$ for some
    $g \in G$, where $C_g$ is the conjugation by $g$.
    In particular, $\phi$ is an automorphism.
  \end{proposition}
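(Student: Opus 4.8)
The plan is to descend to the Lie algebra, exploit the finiteness of the outer automorphism group of a semisimple Lie algebra, and then lift the resulting relation back to $G$. First I would note that, since $G$ is connected and $\phi$ is surjective, its differential $\phi'$ is a surjective endomorphism of the semisimple Lie algebra $\liealgebra{g}$ (the image of $\phi$ is an immersed subgroup whose Lie algebra is $\phi'(\liealgebra{g})$, and surjectivity forces this to be all of $\liealgebra{g}$). Being a surjective linear map of a finite dimensional space, $\phi'$ is then an automorphism of $\liealgebra{g}$.

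Next I would invoke the structure theory of semisimple Lie algebras. The inner automorphisms $\mathrm{Inn}(\liealgebra{g})$ form a normal subgroup of $\mathrm{Aut}(\liealgebra{g})$ whose quotient $\mathrm{Out}(\liealgebra{g}) = \mathrm{Aut}(\liealgebra{g})/\mathrm{Inn}(\liealgebra{g})$ is finite, since it is realized by automorphisms of the Dynkin diagram. Hence the class of $\phi'$ in $\mathrm{Out}(\liealgebra{g})$ has finite order, so there is a $k \in \naturals$ with $(\phi')^k \in \mathrm{Inn}(\liealgebra{g})$. Because $G$ is connected we have $\Ad(G) = \mathrm{Inn}(\liealgebra{g})$ (indeed $G$ is generated by $\exp(\liealgebra{g})$ and $\Ad(\exp X) = e^{\ad X}$), so I can write $(\phi')^k = \Ad(g)$ for some $g \in G$.

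Finally I would lift this identity to the group level. The conjugation $C_g$ is an automorphism of $G$ whose differential at the identity is $(C_g)' = \Ad(g) = (\phi')^k = (\phi^k)'$. Two endomorphisms of a connected Lie group that share the same differential at the identity must coincide: they agree on $\exp(\liealgebra{g})$ via $\exp \circ (\phi^k)' = \phi^k \circ \exp$, and such a neighborhood of $1_G$ generates the connected group $G$. Therefore $\phi^k = C_g$, which is bijective; in particular $\phi^k$ injective forces $\phi$ injective, and since $\phi$ is already surjective it is a bijective endomorphism, hence an automorphism of $G$.

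The main obstacle is supplying the two inputs from structure theory cleanly: the finiteness of $\mathrm{Out}(\liealgebra{g})$ and the exact identification $\Ad(G) = \mathrm{Inn}(\liealgebra{g})$ valid for connected $G$. Once these are secured, the reduction to $\phi'$ and the rigidity principle that a homomorphism of a connected Lie group is determined by its differential make the remainder routine, and the conclusion that $\phi$ is an automorphism follows immediately from $\phi^k = C_g$.
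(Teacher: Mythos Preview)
Your proof is correct and follows essentially the same route as the paper: reduce to the differential $\phi'$, use that $\mathrm{Aut}(\liealgebra{g})/\mathrm{Int}(\liealgebra{g})$ is finite for semisimple $\liealgebra{g}$ (the paper simply cites Helgason, Theorem IX.5.4, for this), write $(\phi')^k = \Ad(g)$, and then lift via $\exp$ and connectedness. One small caveat: your parenthetical that $\mathrm{Out}(\liealgebra{g})$ is ``realized by automorphisms of the Dynkin diagram'' is literally the statement for complex semisimple $\liealgebra{g}$; over $\reals$ the description is a bit more delicate, but the finiteness of $\mathrm{Out}(\liealgebra{g})$---which is all you actually use---still holds, since $\mathrm{Int}(\liealgebra{g})$ is the identity component of the algebraic group $\mathrm{Aut}(\liealgebra{g})$.
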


  \begin{proof}
    Notice that
    \begin{equation*}
      \phi^k ( \exp X )
      =
      \exp
      \left[
        (\phi')^k X
      \right].
    \end{equation*}
    But since $\liealgebra{g}$ is semi-simple, we know that there is a
    $k \in \naturals$ such that $(\phi')^k$ is an internal
    endomorphism of $\liealgebra{g}$
    (see Theorem 5.4, page 423 of \cite{helgason}).
    That is, there exists $g \in G$ such that $(\phi')^k = \Ad(g)$ and hence
    \begin{equation*}
      \phi^k ( \exp X ) = \exp((\phi')^k X) = \exp(\Ad(g) X) = C_g(\exp X).
    \end{equation*}
    Since $G$ is generated by elements
    of the form $\exp X$, it follows that $\phi^k = C_g$.

    We have that $\phi$ is an automorphism, since $C_g$ is an automorphism.
  \end{proof}

\subsection{Topological Entropy}

  In this subsection, we use the previous result in order to show that surjective   endomorphisms of connected semi-simple Lie groups have zero entropy.
  Proposition
  \ref{prop:endomorfismo_sobrejetivo_de_semisimples_eh_automorfismo},
  shows that some iteration of a surjective endomorphism $\phi$ of a semi-simple Lie group $G$ is in fact a conjugation by some element of $G$.
  Thus we first consider the dynamics of conjugations. For some $g \in G$, we denote by $G_g$ the centralizer of $g$ in $G$, which is the set of fixed points of the conjugation $C_g$.

  \begin{lemma}
    \label{lemma:recorrencia_em_grupos_lineares_semisimples}

    Let $G$ be a connected linear semi-simple Lie group,
    and $\function{C_g}{G}{G}$ the conjugation by $g \in G$.
    Then,
    \begin{equation*}
      \recurrent( C_g )
      =
      G_h
      \cap
      G_u,
    \end{equation*}
    where $g = ehu$ is the multiplicative Jordan decomposition of $g$.  In particular, $C_g$ restricted to its recurrent set is an isometry for some distance.
  \end{lemma}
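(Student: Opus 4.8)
The plan is to realize the conjugation $C_g$ as the restriction to $G$ of a single \emph{linear} isomorphism of an ambient matrix space, and then to invoke Proposition \ref{prop:recorente isomorfismo linear}. Since $G$ is linear, I would fix a faithful representation realizing $G$ as a closed subgroup of $\Gl(V)$ for some finite dimensional $V$, and let $\Phi_g$ denote the linear isomorphism of $\gl(V)$ given by $\Phi_g(M) = gMg^{-1}$. Then $C_g = \Phi_g|_G$, and because $G$ carries the subspace topology, a point $x \in G$ satisfies $C_g^{n_k}(x) \to x$ in $G$ exactly when $\Phi_g^{n_k}(x) \to x$ in $\gl(V)$. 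Hence $\recurrent(C_g) = \recurrent(\Phi_g) \cap G$, and it suffices to compute $\recurrent(\Phi_g)$ as a linear isomorphism of the finite dimensional space $\gl(V)$.

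Next I would compute the multiplicative Jordan decomposition of $\Phi_g$. Writing $g = ehu$ and using that $e$, $h$, $u$ pairwise commute, the homomorphism property $\Phi_{ab} = \Phi_a \Phi_b$ gives $\Phi_g = \Phi_e \Phi_h \Phi_u$ with the three factors commuting. The key point, and the step I expect to be the main obstacle, is to check that this is precisely the $E$--$H$--$U$ decomposition of $\Phi_g$, i.e.\ that $\Phi_e$ is elliptic, $\Phi_h$ hyperbolic and $\Phi_u$ unipotent. I would verify this by tracking eigenvalues: if $\lambda_i$ are the eigenvalues of $g$ on $V$, those of $\Phi_g$ are the ratios $\lambda_i \lambda_j^{-1}$. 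Because $G$ is linear semi-simple, the abstract components $e$, $h$, $u$ coincide with the modulus-one, positive-real and unipotent parts of $g$ acting on $V$; consequently $\Phi_e$ is semisimple with eigenvalues of modulus one, $\Phi_h$ is semisimple with positive real eigenvalues, and $\Phi_u$ is unipotent. By uniqueness of the Jordan decomposition, $(\Phi_g)_H = \Phi_h$ and $(\Phi_g)_U = \Phi_u$.

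Applying Proposition \ref{prop:recorente isomorfismo linear} then gives $\recurrent(\Phi_g) = \fix(\Phi_h) \cap \fix(\Phi_u)$. Since $\fix(\Phi_h) = \setsuchthat{M}{Mh = hM}$ and $\fix(\Phi_u) = \setsuchthat{M}{Mu = uM}$, intersecting with $G$ identifies these fixed-point sets with the centralizers $G_h$ and $G_u$. Combining this with the reduction of the first paragraph yields $\recurrent(C_g) = \fix(\Phi_h) \cap \fix(\Phi_u) \cap G = G_h \cap G_u$, as claimed.

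Finally, for the isometry assertion I would show that $C_g$ reduces to conjugation by the elliptic part on its recurrent set. For $x \in G_h \cap G_u$, the fact that $x$ commutes with $h$ and $u$ gives $C_g(x) = ehu\,x\,u^{-1}h^{-1}e^{-1} = exe^{-1} = C_e(x)$, so $C_g$ agrees with $\Phi_e$ on $\recurrent(C_g)$. Since $\Phi_e$ is elliptic, it preserves some inner product on $\gl(V)$; the associated flat distance $d(M,N) = \norm{M - N}$ makes $\Phi_e$ a global isometry of $\gl(V)$, and its restriction to $G$ is a distance for which $C_g$, restricted to the invariant set $\recurrent(C_g) = G_h \cap G_u$, is an isometry.
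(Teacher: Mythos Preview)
Your argument is correct and follows essentially the same route as the paper's proof. The paper writes the ambient linear map as $\Ad(g)$ (the adjoint representation of $\Gl(d)$ on $\gl(d)$), which is exactly your $\Phi_g$, and then applies Proposition~\ref{prop:recorente isomorfismo linear} after reducing via $\recurrent(C_g) = \recurrent(\Ad(g)) \cap G$; the only cosmetic difference is that the paper outsources the verification that $\Ad(g) = \Ad(e)\Ad(h)\Ad(u)$ is the multiplicative Jordan decomposition to an external reference (Lemma~3.6 of \cite{patrao_santos_seco}), whereas you sketch it directly via the eigenvalue ratios $\lambda_i\lambda_j^{-1}$.
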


  \begin{proof}
    Notice that $C_g = \Ad(g)|_G$, where $\Ad$ is the adjoint
    representation of $\Gl(d)$, with $G \leq \Gl(d)$.
    Also, Lemma 3.6 in \cite{patrao_santos_seco} shows that
    \begin{equation*}
      \Ad(g)
      =
      \Ad(e)
      \Ad(h)
      \Ad(u)
    \end{equation*}
    is the Jordan decomposition for $\Ad(g)$.

    Since the recurrent set behaves well with respect to restrictions, we know
    from Proposition \ref{prop:recorente isomorfismo linear} that
    \begin{align*}
      \recurrent(C_g)
      &=
      \recurrent(\Ad(g))
      \cap
      G
      \\
      &=
      \fix(\Ad(h))
      \cap
      \fix(\Ad(u))
      \cap
      G
      \\
      &=
      \fix(C_h)
      \cap
      \fix(C_u)
      \\
      &=
      G_h
      \cap
      G_u.
    \end{align*}
    The last claim follows, since $C_g$ coincides in $\recurrent( C_g )$ with the elliptic linear isomorphism $\Ad(e)$.
  \end{proof}

  \begin{theorem}
    \label{prop:semisimples_tem_entropia_nula}

    Let $\function{\phi}{G}{G}$ be a surjective endomorphism of a
    connected semi-simple Lie group $G$.
    Then
    \begin{equation*}
        \topologicalentropy{\phi} = \topologicalentropy{\phi|_{T(G)}} = 0.
    \end{equation*}
  \end{theorem}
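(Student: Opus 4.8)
The plan is to reduce the whole statement to the dynamics of a single conjugation and then rule out the behaviour that positive entropy would force. First, since $G$ is semi-simple its center is discrete, so $Z(G)_0$ is trivial, hence $T(G)$ is trivial; thus $\phi|_{T(G)}$ acts on a one-point space and $\topologicalentropy{\phi|_{T(G)}} = 0$ automatically. It therefore only remains to prove $\topologicalentropy{\phi} = 0$. By Proposition \ref{prop:endomorfismo_sobrejetivo_de_semisimples_eh_automorfismo}, $\phi$ is an automorphism, in particular a homeomorphism and hence a proper map, and there are $k \in \naturals$ and $g \in G$ with $\phi^k = C_g$. Using the standard power rule $\topologicalentropy{\phi^k} = k\,\topologicalentropy{\phi}$ (which follows from the variational principle of Proposition \ref{prop:principio_variacional:patrao} together with the identity $\measureentropy{\mu}{\phi^k} = k\,\measureentropy{\mu}{\phi}$), it suffices to prove $\topologicalentropy{C_g} = 0$.

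I would argue by contradiction: suppose $\topologicalentropy{C_g} > 0$. Since $C_g$ is proper, Proposition \ref{prop:positive_entropy_has_li-yorke_pair} produces a Li-Yorke pair $(a,b)$, which by the construction in its proof can be taken with $a \neq b$. Unwinding Definition \ref{def:li-yorke_pair}, there are $n_k, m_k \to \infty$ and $c \in G$ with $(C_g^{n_k}(a), C_g^{n_k}(b)) \to (a,b)$ and $(C_g^{m_k}(a), C_g^{m_k}(b)) \to (c,c)$; the first condition shows in particular that $a, b \in \recurrent(C_g)$. To bring in Lemma \ref{lemma:recorrencia_em_grupos_lineares_semisimples}, which only covers linear groups, I would pass to the adjoint group $\bar{G} = \Ad(G) \cong G/Z(G)$, a connected linear semi-simple group, through the projection $\function{\pi}{G}{\bar{G}}$, which satisfies $\pi \circ C_g = C_{\bar g} \circ \pi$ with $\bar g = \Ad(g)$.

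Next I would split into two cases according to whether $\pi$ separates $a$ and $b$. If $\pi(a) \neq \pi(b)$, applying the continuous map $\pi \times \pi$ to the two convergences shows that $\pi(a), \pi(b) \in \recurrent(C_{\bar g})$ and that $(C_{\bar g}^{m_k}\pi(a), C_{\bar g}^{m_k}\pi(b)) \to (\pi(c), \pi(c))$. By Lemma \ref{lemma:recorrencia_em_grupos_lineares_semisimples}, $C_{\bar g}$ is an isometry on its recurrent set for some distance $d$; since that set is invariant, $d(C_{\bar g}^{m_k}\pi(a), C_{\bar g}^{m_k}\pi(b)) = d(\pi(a),\pi(b))$ is constant in $k$, yet it tends to $d(\pi(c),\pi(c)) = 0$, forcing $\pi(a) = \pi(b)$, a contradiction. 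If instead $\pi(a) = \pi(b)$, then $b = az$ for some $z \in Z(G)$ with $z \neq 1$; being central, $z$ is fixed by $C_g$, so $C_g^{m_k}(b) = C_g^{m_k}(a)\,z$ for every $k$. The second convergence then gives both $C_g^{m_k}(a) \to c$ and $C_g^{m_k}(a)\,z \to c$, and by continuity of right translation the latter limit equals $cz$, so $c = cz$ and hence $z = 1$, again a contradiction. Either way $\topologicalentropy{C_g} = 0$, so that $\topologicalentropy{\phi} = \frac{1}{k}\topologicalentropy{C_g} = 0 = \topologicalentropy{\phi|_{T(G)}}$.

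The main obstacle is exactly this reduction to the linear case, because Lemma \ref{lemma:recorrencia_em_grupos_lineares_semisimples} is stated only for linear semi-simple groups while the center $Z(G)$ may be infinite, as for $\widetilde{SL(2,\reals)}$, so one cannot simply push the conjugacy argument through $\pi$ via a properness hypothesis. The case split is what makes it work: the linear isometry controls the \emph{horizontal} part of the pair and the centrality of $z$ controls the purely \emph{vertical} (fibre) part, the decisive point being that the collapsing requirement $(c,c)$ built into a Li-Yorke pair is incompatible both with an isometry and with a nontrivial constant central offset.
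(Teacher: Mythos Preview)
Your argument is correct and follows essentially the same route as the paper: reduce to a conjugation $C_g$, assume positive entropy, produce a Li-Yorke pair, push it through $\Ad$ to the linear adjoint group, use the isometry on $\recurrent(C_{\Ad(g)})$ from Lemma~\ref{lemma:recorrencia_em_grupos_lineares_semisimples} to force $\Ad(a)=\Ad(b)$, and then exploit that the central offset is $C_g$-fixed to conclude $a=b$. The only cosmetic difference is that you present it as a two-case split, whereas the paper runs the isometry argument unconditionally to obtain $\Ad(a)=\Ad(b)$ and then immediately performs the central computation; your Case~1 is not a separate branch but simply the derivation that lands you in Case~2.
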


  \begin{proof}
    Since $\topologicalentropy{\phi^k} = k\topologicalentropy{\phi}$, we have that $\topologicalentropy{\phi} = 0$ if and only if $\topologicalentropy{\phi^k} = 0$.  Since $G$ is connected and semi-simple, there is $k > 0$ such that $\phi^k = C_g$, for some $g \in G$.
    Therefore, it is enough to prove that
    $\topologicalentropy{C_g} = 0$.

    From proposition
    \ref{prop:positive_entropy_has_li-yorke_pair}, we know that, if $\topologicalentropy{C_g} > 0$, there exists a
    Li-Yorke pair for $C_g$, that is, two distinct elements
    $a, b \in G$, such that
    $(C_g^{n_k}(a), C_g^{n_k}(b)) \rightarrow (a,b)$ and
    $(C_g^{m_k}(a), C_g^{m_k}(b)) \rightarrow (c,c)$, for some
    $c \in G$.
    Consider $C_{\Ad(g)}$, and notice that
    $\Ad \circ C_g = C_{\Ad(g)} \circ \Ad$.
    Now, since $a,b \in \recurrent(C_g)$, we also have that
    $\Ad(a), \Ad(b) \in \recurrent(C_{\Ad(g)})$.
    But $C_{\Ad(g)}|_{\recurrent(C_{\Ad(g)})} = C_{\Ad(e)}|_{\recurrent(C_{\Ad(g)})}$
    is an isometry for some distance in $\Ad(G)$.
    This way, the fact that
    $\left(C_{\Ad(g)}^{m_k}(\Ad(a)), C_{\Ad(g)}^{m_k}(\Ad(b))\right)$ converges to
    $\left(\Ad(c), \Ad(c)\right)$ implies that $\Ad(a) = \Ad(b)$.

    So, we know that $a = w u$ and $b = w v$ for some $w \in G$
    and $u,v \in Z(G)$.
    We also have that
    \begin{align*}
      C_g^{m_k}(w) u
      =
      C_g^{m_k}(a)
      &\rightarrow
      c
      \\
      C_g^{m_k}(w) v
      =
      C_g^{m_k}(b)
      &\rightarrow
      c.
    \end{align*}
    But this means that $u = v$.
    And then $a = b$, contradicting the fact that they are a Li-Yorke
    pair.
  \end{proof}


\section{The Reductive Case}
  \label{sec:caso_redutivel}


\subsection{Properness}

  Let $G$ be a connected reductive Lie group.
  It will be useful to consider the surjective group homomorphism
  \begin{equation*}
    \functionarray{\pi}{Z(G)_0 \times G'}{G}
                       {(z\,\,, \,\,g)}{zg},
  \end{equation*}
  where $Z(G)_0$ is the identity component of its center, and
  $G' = [G,G]$ is the derived group which is connected and semi-simple.
  Also, $G$ and $Z(G)_0 \times G'$ have the same Lie algebra
  $\liealgebra{z} \times \liealgebra{g}'$, where
  $\liealgebra{g}$ is the Lie algebra of $G$,
  $\liealgebra{z}$ is the Lie algebra of $Z(G)_0$ and
  $\liealgebra{g}' = [\liealgebra{g},\liealgebra{g}]$
  is the Lie algebra of $G'$.

  \begin{lemma}
    \label{lemma:caso_redutivel_pode_ser_decomposto}

    Let $\function{\phi}{G}{G}$ be a surjective endomorphism of a connected
    reductive Lie group $G$.
    Then, $\phi$ induces the surjective homomorphism
    \begin{equation*}
      \widetilde{\phi} = \phi|_{Z(G)_0} \times \phi|_{G'}
    \end{equation*}
    in $Z(G)_0 \times G'$, such that the following diagram commutes
    \begin{equation*}
      \xymatrix
      {
        Z(G)_0 \times G'  \ar[d]_{\pi} \ar[r]^{\widetilde{\phi}}  &Z(G)_0 \times G' \ar[d]^{\pi}
        \\
        G                 \ar[r]_{\phi}                       &G
      },
    \end{equation*}
  \end{lemma}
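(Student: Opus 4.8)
The plan is to verify, in order: first that $\widetilde{\phi}$ is well defined, i.e.\ that $\phi$ carries each of the two factors $Z(G)_0$ and $G'$ into itself; second that each restriction is surjective, so that $\widetilde{\phi}$ is surjective; and finally that the square commutes. The last point is a one-line computation, so the real content is in the first two, and the genuinely delicate step is the surjectivity of $\phi|_{Z(G)_0}$.

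For the preservation of $G' = [G,G]$, I would use that a homomorphism sends commutators to commutators: $\phi([a,b]) = [\phi(a),\phi(b)] \in G'$, hence $\phi(G') \subseteq G'$. For $Z(G)_0$, I would first show $\phi(Z(G)) \subseteq Z(G)$ using surjectivity of $\phi$: given $z \in Z(G)$ and $h \in G$, choose $g$ with $\phi(g) = h$; then $\phi(z)h = \phi(zg) = \phi(gz) = h\phi(z)$, so $\phi(z)$ commutes with every element and is central. Since $\phi$ is continuous and $Z(G)_0$ is connected with $1_G \in Z(G)_0$, the image $\phi(Z(G)_0)$ is a connected subset of $Z(G)$ containing $1_G$, and therefore lies in $Z(G)_0$. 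This makes $\widetilde{\phi} = \phi|_{Z(G)_0} \times \phi|_{G'}$ a well-defined endomorphism of $Z(G)_0 \times G'$.

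For surjectivity, $\phi|_{G'}$ is immediate: any generating commutator $[a,b]$ of $G'$ equals $[\phi(a'),\phi(b')] = \phi([a',b'])$ once we pick $\phi(a') = a$ and $\phi(b') = b$, so $G' \subseteq \phi(G')$ and hence $\phi(G') = G'$. The surjectivity of $\phi|_{Z(G)_0}$ is the main obstacle, since a direct argument only places a preimage of $z_0 \in Z(G)_0$ in $\phi(Z(G)_0)\cdot (Z(G)_0 \cap G')$, which is not obviously $Z(G)_0$. I would instead pass to the differential. As in the nilpotent case, the surjectivity of $\phi$ forces $\phi'$ to be a surjective, hence bijective, endomorphism of $\liealgebra{g}$, i.e.\ a Lie-algebra automorphism; an automorphism preserves the center, so $\phi'(\liealgebra{z}) = \liealgebra{z}$, where $\liealgebra{z}$ is the Lie algebra of $Z(G)_0$. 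Since $Z(G)_0$ is connected abelian, $\exp$ maps $\liealgebra{z}$ onto $Z(G)_0$, and from $\phi \circ \exp = \exp \circ \phi'$ we obtain $\phi(Z(G)_0) = \exp(\phi'(\liealgebra{z})) = \exp(\liealgebra{z}) = Z(G)_0$. Thus $\phi|_{Z(G)_0}$ is surjective, and so is $\widetilde{\phi}$.

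Finally, commutativity follows because $\phi$ is a homomorphism: for $(z,g) \in Z(G)_0 \times G'$ we have $\pi(\widetilde{\phi}(z,g)) = \phi(z)\phi(g) = \phi(zg) = \phi(\pi(z,g))$, so $\pi \circ \widetilde{\phi} = \phi \circ \pi$. I expect no difficulty in the preservation and commutativity steps; the only point requiring care is the center-surjectivity, where working with the bijective differential $\phi'$ rather than arguing directly inside $G$ is what makes the argument go through.
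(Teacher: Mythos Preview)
Your proof is correct and follows essentially the same approach as the paper's: both establish the inclusions $\phi(Z(G)_0)\subset Z(G)_0$ and $\phi(G')\subset G'$, then obtain surjectivity of the restrictions by passing to the differential $\phi'$ (which is a Lie-algebra automorphism and hence satisfies $\phi'(\liealgebra{z})=\liealgebra{z}$ and $\phi'(\liealgebra{g}')=\liealgebra{g}'$) together with connectedness, and finally read off the commutativity of the square from the homomorphism property. Your version is simply more explicit where the paper is terse --- you spell out why $\phi(Z(G))\subset Z(G)$ and why $\phi(G')\subset G'$, and you give a direct group-theoretic argument for the surjectivity of $\phi|_{G'}$ rather than the Lie-algebra route --- but the underlying strategy is the same.
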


  \begin{proof}
    It is evident that
    \begin{equation*}
      \phi(Z(G)_0)
      \subset
      Z(G)_0,
    \end{equation*}
    since $\phi(Z(G)_0)$ is a connected subgroup of $Z(G)$ containing the identity, and therefore is a subset of
    $Z(G)_0$.
    It happens that
    \begin{equation*}
      \phi(G')
      \subset
      G'
    \end{equation*}
    also holds.
    But this implies that
    $\phi'(\liealgebra{z}) = \liealgebra{z}$
    and
    $\phi'(\liealgebra{g}') = \liealgebra{g}'$.
    And because both $Z(G)_0$ and $G'$ are connected, we have
    that $\function{\phi|_{Z(G)_0}}{Z(G)_0}{Z(G)_0}$
    and $\function{\phi|_{G'}}{G'}{G'}$ are surjective.

    The commutativity of the above diagram is an immediate consequence of
    the fact that $\phi$ is an homomorphism.
  \end{proof}

  \begin{proposition}
    \label{lemma:caso_redutivel:aplicacao_eh_propria}

    Let $\function{\phi}{G}{G}$ be a surjective endomorphism of a connected
    reductive Lie group $G$. If $\pi$ is a proper map, then $\phi$ is a proper map.
  \end{proposition}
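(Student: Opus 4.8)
The plan is to transport properness across the commutative diagram of Lemma~\ref{lemma:caso_redutivel_pode_ser_decomposto}. Writing $\widetilde{\phi} = \phi|_{Z(G)_0} \times \phi|_{G'}$ for the induced endomorphism of $Z(G)_0 \times G'$, that diagram gives $\phi \circ \pi = \pi \circ \widetilde{\phi}$. First I would show that $\widetilde{\phi}$ is proper; then, since $\pi$ is proper by hypothesis, the composition $\phi \circ \pi = \pi \circ \widetilde{\phi}$ is proper as well; and finally I would descend properness from $\phi \circ \pi$ to $\phi$ using that $\pi$ is a surjection.

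To see that $\widetilde{\phi}$ is proper, note that it is a surjective endomorphism of the Lie group $Z(G)_0 \times G'$, so by Lemma~\ref{lemma:endomorfismo_com_nucleo_compacto_eh_proprio} it suffices to check that
\begin{equation*}
  \ker(\widetilde{\phi})
  =
  \ker(\phi|_{Z(G)_0}) \times \ker(\phi|_{G'})
\end{equation*}
is compact. The factor $\phi|_{G'}$ is a surjective endomorphism of the connected semi-simple group $G'$, hence an automorphism by Proposition~\ref{prop:endomorfismo_sobrejetivo_de_semisimples_eh_automorfismo}, so $\ker(\phi|_{G'})$ is trivial. For the abelian factor I would use the description of Section~\ref{sec:caso_abeliano}: identifying $Z(G)_0$ with $T(G) \times \reals^q$, the surjective endomorphism $\phi|_{Z(G)_0}$ has the block form $\Matrix{cc}{T & * \\ 0 & S}$, where surjectivity forces the linear map $S$ of $\reals^q$ to be onto, hence an isomorphism. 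Consequently every element of $\ker(\phi|_{Z(G)_0})$ has vanishing $\reals^q$-component, so this kernel is a closed subgroup of the compact torus $T(G) \times \set{0}$ and is therefore compact. Thus $\ker(\widetilde{\phi})$ is compact and $\widetilde{\phi}$ is proper.

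For the final descent, let $K \subset G$ be compact. Since $\phi \circ \pi$ is proper, $(\phi \circ \pi)^{-1}(K)$ is compact; and since $\pi$ is surjective, $\phi^{-1}(K) = \pi\left((\phi \circ \pi)^{-1}(K)\right)$ is the continuous image of a compact set, hence compact. Therefore $\phi$ is proper.

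I expect the only real obstacle to be the abelian factor, where one must exclude a non-compact kernel coming from the vector part $\reals^q$; this is precisely where the surjectivity of $\phi|_{Z(G)_0}$ (supplied by Lemma~\ref{lemma:caso_redutivel_pode_ser_decomposto}) enters, forcing $S$ to be invertible and confining $\ker(\phi|_{Z(G)_0})$ to the compact torus. The transport steps are then routine: a composition of proper maps is proper, and a continuous surjection carries the proper preimage of a compact set onto a compact set.
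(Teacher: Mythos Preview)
Your proof is correct and follows essentially the same route as the paper: both arguments show that $\widetilde{\phi}$ is proper and then use the commutative diagram together with surjectivity of $\pi$ to write $\phi^{-1}(K) = \pi\bigl(\widetilde{\phi}^{-1}(\pi^{-1}(K))\bigr)$ as a compact set. The only cosmetic difference is that the paper obtains properness of $\phi|_{Z(G)_0}$ by invoking Proposition~\ref{prop:endomorfismo_sobrejetivo_de_nilpotente_eh_proprio} (abelian groups being nilpotent), whereas you unpack the block form from Section~\ref{sec:caso_abeliano} directly; the content is the same.
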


  \begin{proof}
    First observe that the endomorphism $\widetilde{\phi}$ presented in Lemma
    \ref{lemma:caso_redutivel_pode_ser_decomposto}
    is proper, since it is the product of two proper maps. In fact, we have that $\phi|_{Z(G)_0}$ and $\phi|_{G'}$ are proper endomorphisms, respectively, by Propositions
    \ref{prop:endomorfismo_sobrejetivo_de_nilpotente_eh_proprio}
    and
    \ref{prop:endomorfismo_sobrejetivo_de_semisimples_eh_automorfismo}.
    Considering the diagram in Lemma
    \ref{lemma:caso_redutivel_pode_ser_decomposto}, we have that, if $K \subset G$ is compact, then
    $\phi^{-1}(K) = \pi \circ \widetilde{\phi}^{-1} \circ \pi^{-1}(K)$
    is also compact, since $\widetilde{\phi}$ and $\pi$ are proper maps.
  \end{proof}


\subsection{Topological Entropy}
  We start computing the topological entropy of the endomorphism $\widetilde{\phi}$.

  \begin{proposition}
    \label{prop:redutivel:entropia_do_produto}

    Let $\function{\phi}{G}{G}$ be a surjective endomorphism of a
    reductive connected Lie group $G$ and $\widetilde{\phi}$ be the associated endomorphism. Then
    \begin{equation*}
      \topologicalentropy{\widetilde{\phi}}
      =
      \topologicalentropy{\phi|_{T(G)}}.
    \end{equation*}
  \end{proposition}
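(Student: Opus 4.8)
The plan is to exploit the product structure $\widetilde{\phi} = \phi|_{Z(G)_0} \times \phi|_{G'}$ established in Lemma \ref{lemma:caso_redutivel_pode_ser_decomposto}, together with the product formula for entropy (Proposition \ref{prop:entropia:formula_do_produto}). Since $\widetilde{\phi}$ is a proper map (as noted in the proof of Proposition \ref{lemma:caso_redutivel:aplicacao_eh_propria}, being the product of two proper maps), the product formula gives
\begin{equation*}
  \topologicalentropy{\widetilde{\phi}}
  =
  \topologicalentropy{\phi|_{Z(G)_0}}
  +
  \topologicalentropy{\phi|_{G'}}.
\end{equation*}
The strategy then reduces to computing each of the two summands separately, using the results already proved for the abelian and semi-simple cases.

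First I would handle the second summand. Since $G'$ is connected and semi-simple, Theorem \ref{prop:semisimples_tem_entropia_nula} applies directly to the surjective endomorphism $\phi|_{G'}$, yielding $\topologicalentropy{\phi|_{G'}} = 0$. Thus the entire contribution of the semi-simple factor vanishes, and we are left with $\topologicalentropy{\widetilde{\phi}} = \topologicalentropy{\phi|_{Z(G)_0}}$.

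Next I would treat the abelian factor $Z(G)_0$, which is a connected abelian Lie group, so the computation of Section \ref{sec:caso_abeliano} applies. That computation shows that for a surjective endomorphism of a connected abelian Lie group, the entropy equals the entropy of its restriction to the toral component of that group. Here the key identification is that the toral component of $Z(G)_0$ coincides with $T(G)$, the toral component of $G$: indeed, $T(G)$ is by definition the toral component of the identity component $Z(G)_0$ of the center of $G$, and the endomorphism $\phi$ restricts to $T(G)$ in a compatible way (recall $\phi(T(G)) \subset T(G)$ since $\phi(T(G))$ is compact, connected and central). Therefore the abelian case result gives $\topologicalentropy{\phi|_{Z(G)_0}} = \topologicalentropy{\phi|_{T(G)}}$. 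Combining the two computations yields the claim.

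The main obstacle I anticipate is bookkeeping rather than a deep difficulty: one must verify carefully that the toral component of the abelian group $Z(G)_0$ is exactly $T(G)$ and that $\phi|_{Z(G)_0}$ restricts to $\phi|_{T(G)}$ consistently, so that the abelian-case formula can be invoked verbatim. A secondary point requiring care is confirming that $\phi|_{Z(G)_0}$ is genuinely proper (so that the entropy notion is defined and the abelian-case argument, which assumes properness, is applicable); this follows from Proposition \ref{prop:endomorfismo_sobrejetivo_de_nilpotente_eh_proprio} applied to the abelian (hence nilpotent) group $Z(G)_0$.
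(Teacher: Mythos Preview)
Your proof is correct and follows essentially the same approach as the paper: apply the product formula (Proposition~\ref{prop:entropia:formula_do_produto}) to $\widetilde{\phi} = \phi|_{Z(G)_0} \times \phi|_{G'}$, then invoke the abelian case to get $\topologicalentropy{\phi|_{Z(G)_0}} = \topologicalentropy{\phi|_{T(G)}}$ and the semi-simple case to get $\topologicalentropy{\phi|_{G'}} = 0$. The additional checks you flag (properness of the factors, and the identification of the toral component of $Z(G)_0$ with $T(G)$) are exactly the bookkeeping the paper's proof tacitly assumes.
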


  \begin{proof}
    Using Proposition \ref{prop:entropia:formula_do_produto}, we have that
    \begin{equation*}
      \topologicalentropy{\widetilde{\phi}}
      =
      \topologicalentropy{\phi|_{Z(G)_0}}
      +
      \topologicalentropy{\phi|_{G'}}.
    \end{equation*}
    The result follows, since, from the abelian and semi-simple cases,
    we know that
    $\topologicalentropy{\phi|_{Z(G)_0}} = \topologicalentropy{\phi|_{T(G)}}$ and that
    $\topologicalentropy{\phi|_{G'}} = 0$.
  \end{proof}

  \begin{corollary}
    For any surjective endomorphism $\function{\phi}{G}{G}$ of a simply-connected reductive Lie group $G$,
    \begin{equation*}
      \topologicalentropy{\phi}
      =
      \topologicalentropy{\phi|_{T(G)}}.
    \end{equation*}
  \end{corollary}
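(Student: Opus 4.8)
The plan is to reduce the statement to the already-established Proposition \ref{prop:redutivel:entropia_do_produto}, which gives $\topologicalentropy{\widetilde{\phi}} = \topologicalentropy{\phi|_{T(G)}}$, where $\widetilde{\phi} = \phi|_{Z(G)_0} \times \phi|_{G'}$ is the endomorphism of $Z(G)_0 \times G'$ furnished by Lemma \ref{lemma:caso_redutivel_pode_ser_decomposto}. Thus it suffices to prove $\topologicalentropy{\phi} = \topologicalentropy{\widetilde{\phi}}$. The decisive observation is that, when $G$ is simply connected, the surjective homomorphism $\pi : Z(G)_0 \times G' \to G$, $(z,g) \mapsto zg$, is in fact an \emph{isomorphism}, so that $\phi$ and $\widetilde{\phi}$ are conjugate and their entropies agree.

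First I would show that $\pi$ is an isomorphism. Its kernel is $\{(z,z^{-1}) \mid z \in Z(G)_0 \cap G'\}$, and since $Z(G)_0 \cap G'$ is contained in the center of the semi-simple group $G'$, which is discrete, $\ker(\pi)$ is discrete. Because $\liealgebra{g} = \liealgebra{z} \oplus \liealgebra{g}'$, the differential $\pi'$ at the identity is an isomorphism of Lie algebras, so $\pi$ is a local diffeomorphism; being a surjective homomorphism with discrete kernel, it is therefore a covering map. As $Z(G)_0 \times G'$ is connected and $G$ is simply connected, this covering must be trivial, that is, $\pi$ is a diffeomorphism, hence an isomorphism of Lie groups.

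With $\pi$ an isomorphism, it is a proper surjective map, so by Proposition \ref{lemma:caso_redutivel:aplicacao_eh_propria} the endomorphism $\phi$ is proper and $\topologicalentropy{\phi}$ is defined. The commuting square of Lemma \ref{lemma:caso_redutivel_pode_ser_decomposto} then reads $\phi = \pi \circ \widetilde{\phi} \circ \pi^{-1}$. Since $\phi$, $\widetilde{\phi}$ and $\pi$ are all proper and surjective, Proposition \ref{prop:entropia:conjugacao} applied to that square yields $\topologicalentropy{\phi} \leq \topologicalentropy{\widetilde{\phi}}$; applying the same proposition to the inverted square (with the proper surjective map $\pi^{-1}$ in place of $\pi$, which is legitimate because the inverted square $\pi^{-1} \circ \phi = \widetilde{\phi} \circ \pi^{-1}$ also commutes) yields $\topologicalentropy{\widetilde{\phi}} \leq \topologicalentropy{\phi}$. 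Combining the two inequalities with Proposition \ref{prop:redutivel:entropia_do_produto} gives $\topologicalentropy{\phi} = \topologicalentropy{\widetilde{\phi}} = \topologicalentropy{\phi|_{T(G)}}$.

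The main obstacle is the isomorphism claim for $\pi$: one must be certain that simple connectivity of $G$ forces the discrete kernel to vanish, and this is precisely where the covering-space argument is indispensable. It is also the only place where the hypothesis of simple connectivity is used; once $\pi$ is known to be an isomorphism, the rest is a formal consequence of the conjugation invariance encoded in Proposition \ref{prop:entropia:conjugacao} together with the product formula of Proposition \ref{prop:redutivel:entropia_do_produto}.
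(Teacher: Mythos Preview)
Your proof is correct and follows essentially the same route as the paper: the paper's one-line argument is that, since $G$ is simply connected and the Lie algebras of $G$ and $Z(G)_0 \times G'$ coincide, the surjection $\pi$ is an isomorphism, so $\pi$ conjugates $\widetilde{\phi}$ and $\phi$ and their entropies agree. You supply the covering-space details the paper leaves implicit; your double invocation of Proposition~\ref{prop:entropia:conjugacao} is unnecessary (a genuine topological conjugacy already forces equality of entropies), but it is not wrong.
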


  \begin{proof}
    Since $G$ is a universal covering and since the Lie algebras of $G$ and $Z(G)_0 \times G'$ coincide, the homomorphism $\pi$ is a conjugation between $\widetilde{\phi}$ and $\phi$.
  \end{proof}

    Now we consider the case where $G$ is not homeomorphic to $Z(G)_0 \times G'$.

  \begin{proposition}
    \label{prop:redutivel:projecao_do_produto_eh_propria}

    Let $\function{\phi}{G}{G}$ be a surjective endomorphism of a
    reductive connected Lie group $G$.
    If the projection $\function{\pi}{Z(G)_0 \times G'}{G}$ is proper, then
    \begin{equation*}
      \topologicalentropy{\phi}
      =
      \topologicalentropy{\phi|_{T(G)}}.
    \end{equation*}
  \end{proposition}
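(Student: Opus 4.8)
The plan is to squeeze $\topologicalentropy{\phi}$ between $\topologicalentropy{\phi|_{T(G)}}$ on both sides: the conjugation diagram of Lemma \ref{lemma:caso_redutivel_pode_ser_decomposto} will give the upper bound, and the variational principle will give the lower bound. First I would check that the required hypotheses are in place. Since $\pi$ is assumed proper, Proposition \ref{lemma:caso_redutivel:aplicacao_eh_propria} gives that $\phi$ is proper, so that $\topologicalentropy{\phi}$ is well defined; moreover, as observed in the proof of that proposition, $\widetilde{\phi}$ is proper, being a product of proper maps. Thus all three maps $\widetilde{\phi}$, $\phi$ and $\pi$ in the commuting square of Lemma \ref{lemma:caso_redutivel_pode_ser_decomposto} are proper and surjective.

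For the upper bound, I would apply Proposition \ref{prop:entropia:conjugacao} to that square, with $Z(G)_0 \times G'$ and $\widetilde{\phi}$ in the top row and $G$ and $\phi$ in the bottom row. This yields
\begin{equation*}
  \topologicalentropy{\phi} \leq \topologicalentropy{\widetilde{\phi}}.
\end{equation*}
Combining with Proposition \ref{prop:redutivel:entropia_do_produto}, which asserts $\topologicalentropy{\widetilde{\phi}} = \topologicalentropy{\phi|_{T(G)}}$, I obtain $\topologicalentropy{\phi} \leq \topologicalentropy{\phi|_{T(G)}}$.

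For the reverse inequality I would exploit that $T(G)$ is a compact $\phi$-invariant subset of $G$. Every $\phi|_{T(G)}$-invariant Borel probability measure $\nu$ on $T(G)$ is, in particular, a $\phi$-invariant Radon probability on $G$ supported on $T(G)$, and for such a measure $\measureentropy{\nu}{\phi} = \measureentropy{\nu}{\phi|_{T(G)}}$. Taking the supremum over all such $\nu$ and invoking the variational principle of Proposition \ref{prop:principio_variacional:patrao} both on the compact space $T(G)$ and on $G$,
\begin{equation*}
  \topologicalentropy{\phi|_{T(G)}}
  = \sup_{\nu} \measureentropy{\nu}{\phi|_{T(G)}}
  \leq \sup_{\mu} \measureentropy{\mu}{\phi}
  = \topologicalentropy{\phi},
\end{equation*}
where the inequality holds because the measures supported on $T(G)$ form a subfamily of all $\phi$-invariant Radon probabilities on $G$. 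Putting the two bounds together gives the claimed equality.

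The points I expect to require the most care are bookkeeping rather than conceptual: verifying that Proposition \ref{prop:entropia:conjugacao} is applied with the correct orientation, so that the entropy of the quotient dynamics $\phi$ is bounded \emph{above} by that of the total dynamics $\widetilde{\phi}$ and not conversely; and phrasing the lower bound through the variational principle so that working over the non-compact target $G$ causes no difficulty. The latter is legitimate precisely because measures living on the compact invariant set $T(G)$ are genuine invariant Radon probabilities on $G$, which is what makes the monotonicity under restriction to $T(G)$ valid in this non-compact setting.
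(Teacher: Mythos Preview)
Your argument is correct and follows essentially the same route as the paper: apply Proposition~\ref{prop:entropia:conjugacao} to the square of Lemma~\ref{lemma:caso_redutivel_pode_ser_decomposto} to get $\topologicalentropy{\phi}\leq\topologicalentropy{\widetilde{\phi}}$, then invoke Proposition~\ref{prop:redutivel:entropia_do_produto} for $\topologicalentropy{\widetilde{\phi}}=\topologicalentropy{\phi|_{T(G)}}$, and close the sandwich with $\topologicalentropy{\phi|_{T(G)}}\leq\topologicalentropy{\phi}$. The only difference is expository: the paper records the last inequality as immediate, whereas you spell it out via the variational principle; both are fine.
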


  \begin{proof}
    Consider the endomorphism $\tilde{\phi}$ from Lemma
    \ref{lemma:caso_redutivel_pode_ser_decomposto}.
    Now, use Proposition
    \ref{prop:entropia:conjugacao}
    and Proposition
    \ref{prop:redutivel:entropia_do_produto}
    to conclude that
    \begin{equation*}
      \topologicalentropy{\phi}
      \leq
      \topologicalentropy{\tilde{\phi}}
      =
      \topologicalentropy{\phi|_{T(G)}}
      \leq
      \topologicalentropy{\phi}.
    \end{equation*}
  \end{proof}

    As an immediate consequence, we solve the linear reductive case.

  \begin{corollary}
    Let $\function{\phi}{G}{G}$ be a surjective endomorphism of a connected
    linear reductive Lie group $G$.
    Then,
    \begin{equation*}
      \topologicalentropy{\phi}
      =
      \topologicalentropy{\phi|_{T(G)}}.
    \end{equation*}
  \end{corollary}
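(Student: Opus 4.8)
The plan is to reduce the statement to Proposition \ref{prop:redutivel:projecao_do_produto_eh_propria}, whose sole hypothesis is that the projection $\function{\pi}{Z(G)_0 \times G'}{G}$, $(z,g) \mapsto zg$, be a proper map. Once properness of $\pi$ is in hand, the chain of inequalities in that proposition delivers $\topologicalentropy{\phi} = \topologicalentropy{\phi|_{T(G)}}$ with no further computation. Hence the whole task is to verify that $\pi$ is proper under the extra assumption that $G$ is linear.

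First I would analyze the kernel of $\pi$. As $\pi$ is a homomorphism, $\ker \pi = \setsuchthat{(z, z^{-1})}{z \in Z(G)_0 \cap G'}$, which is isomorphic to the subgroup $H = Z(G)_0 \cap G'$. Because $G$ is reductive, its Lie algebra splits as $\liealgebra{g} = \liealgebra{z} \oplus \liealgebra{g}'$, so the differential of $\pi$ at the identity is the natural identification $(X,Y) \mapsto X+Y$, an isomorphism. Thus $\pi$ is a local diffeomorphism and a covering homomorphism, and its kernel $H$ is discrete. By the same topological fact used in Lemma \ref{lemma:endomorfismo_com_nucleo_compacto_eh_proprio} (a continuous open surjection with compact fibers is proper, the fibers here being the cosets of $H$), the map $\pi$ will be proper as soon as $H$ is shown to be compact; since $H$ is discrete, this reduces to showing that $H$ is finite.

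The key step is therefore the finiteness of $H$, and this is where the linearity hypothesis is genuinely used. Any element of $H$ lies in $Z(G)_0$, hence commutes with all of $G$ and in particular with $G'$; being itself an element of $G'$, it lies in the center $Z(G')$, so $H \subset Z(G')$. Now $G' = [G,G]$ is a connected semi-simple analytic subgroup of $G$, and since $G$ is linear so is $G'$; a connected linear semi-simple Lie group has finite center (equivalently, it is a finite cover of its adjoint group). Consequently $Z(G')$ is finite, and $H$, being a subgroup of it, is finite and hence compact.

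With $\pi$ proper, Proposition \ref{prop:redutivel:projecao_do_produto_eh_propria} applies verbatim and yields the claimed equality. The point I would be most careful about is precisely the finiteness of $Z(G')$: this is the one nontrivial structural input, and it is exactly what can fail without linearity, since the universal cover of a semi-simple group need not be linear and may carry an infinite center, in which case $H$ would be noncompact and $\pi$ would no longer be proper.
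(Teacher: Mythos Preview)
Your proposal is correct and follows essentially the same route as the paper: reduce to Proposition \ref{prop:redutivel:projecao_do_produto_eh_propria} by showing $\pi$ is proper, compute $\ker(\pi)\cong Z(G)_0\cap G'\subset Z(G')$, invoke finiteness of the center of a linear connected semi-simple group, and conclude via the topological fact underlying Lemma \ref{lemma:endomorfismo_com_nucleo_compacto_eh_proprio}. If anything, you are slightly more careful than the paper in noting that the lemma is literally stated for endomorphisms and that it is the underlying ``open surjection with compact fibers is proper'' fact that is being applied to $\pi$.
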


  \begin{proof}
    Using Proposition
    \ref{prop:redutivel:projecao_do_produto_eh_propria},
    we just have to show that $\pi$ is proper.
    But $\ker(\pi) = \setsuchthat{(x,x^{-1})}{x \in Z(G)_0 \cap G'}$.
    And, for a linear reductive Lie group $G$,
    $Z(G)_0 \cap G'$ is contained in the center of $G'$.
    But the center of a linear semi-simple Lie group is always finite.
    Now, we just use Lemma
    \ref{lemma:endomorfismo_com_nucleo_compacto_eh_proprio}.
  \end{proof}

    Another immediate consequence is that the formula for the entropy of a endomorphism of a compact group reduces to the formula for the entropy of an endomorphism of a torus.

  \begin{theorem}
    Let $\function{\phi}{G}{G}$ be a surjective endomorphism of a compact connected Lie group $G$.
    Then,
    \begin{equation*}
      \topologicalentropy{\phi}
      =
      \topologicalentropy{\phi|_{T(G)}}.
    \end{equation*}
  \end{theorem}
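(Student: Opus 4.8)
The plan is to reduce this statement to the reductive case already settled in Proposition \ref{prop:redutivel:projecao_do_produto_eh_propria}. The crucial structural observation is that every compact connected Lie group is reductive, so the decomposition furnished by the surjective homomorphism $\function{\pi}{Z(G)_0 \times G'}{G}$, $(z,g) \mapsto zg$, with $G' = [G,G]$ connected and semi-simple, is available here as well. By Proposition \ref{prop:redutivel:projecao_do_produto_eh_propria}, once we know that $\pi$ is a proper map, the desired equality $\topologicalentropy{\phi} = \topologicalentropy{\phi|_{T(G)}}$ follows at once; so the entire content of the proof is the verification of this properness.

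First I would observe that, since $G$ is compact, both $Z(G)_0$ and $G'$ are closed subgroups of $G$ and hence compact. Consequently the domain $Z(G)_0 \times G'$ is itself a compact space. Now a continuous map from a compact space into a Hausdorff space is automatically proper: the preimage of any compact set is the preimage of a closed set, hence a closed subset of a compact space, and therefore compact. This immediately gives that $\pi$ is proper, which is exactly the hypothesis required to invoke Proposition \ref{prop:redutivel:projecao_do_produto_eh_propria}.

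With the properness of $\pi$ in hand, the previous proposition applies verbatim and yields the claimed formula. I do not expect any genuine obstacle in this argument: the two facts to be checked are that a compact connected Lie group is reductive, which is a standard structural result, and that $\pi$ is proper, which is immediate from the compactness of its domain. Thus the hardest ingredients have all been absorbed into the reductive case, and this theorem is merely the observation that the hypotheses of Proposition \ref{prop:redutivel:projecao_do_produto_eh_propria} are automatically satisfied in the compact setting. As announced in the introduction, combining this with Bowen's formula for the toral component $T(G)$ shows that the entropy of an endomorphism of a compact group reduces to the entropy of an endomorphism of a torus.
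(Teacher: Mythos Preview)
Your proposal is correct and follows essentially the same route as the paper: invoke the structural fact that compact connected Lie groups are reductive, observe that $Z(G)_0$ and $G'$ are compact (so that $Z(G)_0 \times G'$ is compact and $\pi$ is automatically proper), and then apply Proposition~\ref{prop:redutivel:projecao_do_produto_eh_propria}. The paper's proof is the same argument, only citing Helgason explicitly for the reductivity of compact groups and for the compactness of $G'$.
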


  \begin{proof}
    Since every compact Lie group is a reductive Lie group (see Proposition 6.6, page 132 of \cite{helgason}), we just have to show that $\pi$ is proper.
    But $Z(G)_0$ and $G'$ are compact subgroups of the compact group
    $G$ (see Theorem 6.9, page 133 of \cite{helgason}).
    Then $Z(G)_0 \times G'$ is compact and $\pi$ is proper.
  \end{proof}


\section{Remarks on the General Case}
  \label{sec:caso_geral}

    In this section, we present a conjecture about the entropy of surjective proper endomorphisms $\phi$ of a connected Lie group $G$. Based on the previous particular cases, one may conjecture that the entropy $\phi$ coincides with the entropy of its restriction to the toral component of $G$. But, if we could prove Proposition \ref{prop:entropia_soma_em_espacos_homogeneos} for locally compact principal bundles, we would first conclude that
    \begin{equation*}
      \topologicalentropy{\phi}
      =
      \topologicalentropy{\phi|_{R_0}},
    \end{equation*}
    where $R = R(G)_0$ is identity component of the solvable radical $R(G)$ of $G$. In fact, considering the diagram
    \begin{equation*}
      \xymatrix
      {
        G  \ar[d]_{\pi} \ar[r]^{\phi}  & G \ar[d]^{\pi}
        \\
        G/R                 \ar[r]_{\widetilde{\phi}}                       & G/R
      },
    \end{equation*}
    which is well defined, since $\phi(R) = R$, we would have that
     \begin{equation*}
      \topologicalentropy{\phi}
      =
      \topologicalentropy{\phi|_{R}} + \topologicalentropy{\widetilde{\phi}}.
    \end{equation*}
    Thus we get the above first formula, since $G/R$ is a connected semi-simple Lie group. Now, considering $R'$, the derived subgroup of $R$, and the diagram
    \begin{equation*}
      \xymatrix
      {
        R  \ar[d]_{\pi} \ar[r]^{\phi|_R}  & R \ar[d]^{\pi}
        \\
        R/R'                 \ar[r]_{\widetilde{\phi|_R}}                       & R/R'
      },
    \end{equation*}
    which is well defined, since $\phi(R') = R'$, we would have that
     \begin{equation*}
      \topologicalentropy{\phi|_{R}}
      =
      \topologicalentropy{\phi|_{R'}}
      +
      \topologicalentropy{\widetilde{\phi|_R}}.
    \end{equation*}
    Since $R'$ is connected nilpotent and $R/R'$ is connected abelian, putting all together, we would have that
     \begin{equation*}
      \topologicalentropy{\phi}
      =
      \topologicalentropy{\phi|_{T(R')}}
      +
      \topologicalentropy{\widetilde{\phi|_R}|_{T(R/R')}}.
    \end{equation*}
    We would conclude that the formula of the topological entropy of a surjective proper endomorphism of a connected Lie group would reduce to the formula of the topological entropy of a surjective endomorphism of a torus.


\begin{thebibliography}{99}
\bibitem{on_li_yorke_pairs}  F.\ Blanchard, E.\ Glasner, S.\ Kolyada and A.\ Maas : \
\textit{On Li-Yorke pairs}. J. Reine Angew. Math.
\textbf{547} (2002), 51-68.

\bibitem{bowen:entropia}  R.\ Bowen : \
\textit{Entropy for group endomorphisms and homogeneous spaces}. Trans. Americ. Math Soc. \textbf{153} (1971), 401-414.

\bibitem{handel}  M. Handel and B. Kitchens: \ \textit{Metrics and entropy
for non-compact spaces}. Isr. J. Math., \textbf{91} (1995),
253-271.

\bibitem{ferraiol:entropia}  T.\ Ferraiol : \
\textit{Entropia e A\c{c}\~{o}es de Grupos de Lie}. Master Thesis, Imecc/Unicamp (2008). https://sites.google.com/site/tferraiol/artigos.

\bibitem{ferraiol_patrao_seco:dinamica_em_flags}
T.\ Ferraiol, M.\ Patr\~{a}o and L.\ Seco: \textit{Jordan
decomposition and dynamics on flag manifolds}, Discrete Contin.\
Dyn.\ Syst.\ A, \textbf{26} no. 3 (2010), 923-947.

\bibitem{entropy_pairs}  E.\ Glasner : \
\textit{A simple characterization of the set of $\mu$-entropy pairs and applications}.  Isr. J. Math., \textbf{102} (1997),
13-27.

\bibitem{helgason} S.\ Helgason \
{\em Differential Geometry, Lie Groups and Symmetric Spaces}.
Academic Press, New York, (1978).

\bibitem{knapp}  A.\ W.\ Knapp  : Lie Groups Beyond an Introduction, Progress
in Mathematics, v. 140, Birkh\"auser, Boston (2004).

\bibitem{hurley:lyapunov} M.\ Hurley : \ \textit{Lyapunov Functions and Attractors in Arbitrary Metric Spaces}.
Proceedings of the American Mathematical Society, \textbf{126} (1998), 245-256.

\bibitem{patrao:entropia} M.\ Patr\~{a}o : \ \textit{Entropy and its Variational Principle for Non-Compact Metric Spaces}.
Ergodic Theory and Dynamical Systems, \textbf{30} (2010), 1529-1542.

\bibitem{patrao_santos_seco} M.\ Patr\~{a}o, L.\ Santos  and L.\ Seco  : \ \textit{A Note on the Jordan Decomposition}.
Proyecciones Journal of Mathematicsy, \textbf{30} (2011), 123-136.

\bibitem{sinai:entropia} Ya.\ G.\ Sinai : \ \textit{On the Notion of Entropy of a Dynamical System}.
Doklady of Russian Academy of Sciences, \textbf{124} (1959), 768-771.
\end{thebibliography}
\end{document}